\newtheorem{theorem}{Theorem}[section]
\newtheorem{corollary}[theorem]{Corollary}
\newtheorem{proposition}[theorem]{Proposition}
\newtheorem{definition}[theorem]{Definition}
\newenvironment{@abssec}[1]{%
     \if@twocolumn
       \section*{#1}%
     \else
       \vspace{.05in}\footnotesize
       \parindent .2in
         {\upshape\bfseries #1. }\ignorespaces 
     \fi}
     {\if@twocolumn\else\par\vspace{.1in}\fi}
\newcommand\AMSname{AMS subject classifications}
\newcommand\keywordsname{Key words}
\newenvironment{keywords}{\begin{@abssec}{\keywordsname}}{\end{@abssec}}
\newenvironment{AMS}{\begin{@abssec}{\AMSname}}{\end{@abssec}}
\numberwithin{equation}{section}
\numberwithin{table}{section}
\definecolor{Xred}{RGB}{228,26,28}
\definecolor{Xblue}{RGB}{14,102,174}
\definecolor{Xgreen}{RGB}{255,127,0}
\definecolor{Xorange}{RGB}{77,175,74}
\definecolor{Xviolet}{RGB}{152,78,163}
\newtheorem{remark}[theorem]{Remark}
\newcommand{\encloseimage}[1]{
\setlength{\fboxrule}{0pt}%
\fbox{\raisebox{-.45\height}{
\setlength{\fboxrule}{1pt}%
#1
}}}
\newcommand{\wt}{\widetilde}
\newcommand{\dx}{\ \mathrm{d}x}
\newcommand{\R}{\ \mathbb{R}}
\newcommand{\subs}{V}
\newcommand{\subssd}{V}
\newcommand{\rsubssd}{\wt V}
\newcommand{\subsbd}{U}
\newcommand{\subsod}{O}
\newcommand{\subsmap}{P}
\newcommand{\puconstant}{c_\mathrm{pu}}
\newcommand{\puconstantV}{c_{\mathrm{pu},\tilde V}}
\newcommand{\ovlpconstant}{c_\mathrm{ovlp}}
\newcommand{\pufuncconstant}{{c_{\mathrm{pu}}^\prime}}
\newcommand{\diam}{\operatorname{diam}}
\newcommand{\extension}[1]{E(#1)}
\newcommand{\training}[1]{T(#1)}
\newcommand{\coupling}[1]{C(#1)}
\newcommand{\rcoupling}[1]{\widetilde C(#1)}
\newcommand{\extend}{\mathrm{Extend}}
\newcommand{\neigh}{\mathcal{N}}
\newcommand{\domains}{I}
\newcommand{\norm}[1]{{\left\|{#1}\right\|}}
\newcommand{\dualpair}[2]{{\left\langle #1 , #2 \right\rangle }}
\newcommand{\parspace}{\mathcal{P}}
\newcommand{\defaultparameter}{\overline{\mu}}
\newcommand{\basis}{{\mathcal{B}}}
\newcommand{\rbasis}{{\widetilde{\mathcal{B}}}}
\newcommand{\jhat}{\hat{\jmath}}
\newcommand{\argmax}{\operatornamewithlimits{arg\,max}}
\newcommand{\supp}{\operatorname{supp}}
\newcommand{\spanset}{\operatorname{span}}
\title{ArbiLoMod, a Simulation Technique
Designed for Arbitrary Local Modifications}
\author{Andreas Buhr \footnotemark[2]\ \footnotemark[3]\ \footnotemark[4]
\and
Christian Engwer \footnotemark[3]
\and
Mario Ohlberger \footnotemark[3]
\and
Stephan Rave \footnotemark[3]\ \footnotemark[5]
}
\begin{document}
\maketitle
\renewcommand{\thefootnote}{\fnsymbol{footnote}}

\footnotetext[2]{Corresponding author. \Letter \ andreas@andreasbuhr.de}
\footnotetext[3]{Institute for Computational and Applied Mathematics,
University of M\"unster, Einsteinstra\ss e 62, 48149 M\"unster, Germany
}
\footnotetext[4]{supported by CST Computer Simulation Technology AG}
\footnotetext[5]{supported by the German Federal Ministry of Education and Research (BMBF) under contract number 05M13PMA}
\begin{abstract}
Engineers manually optimizing a structure using Finite Element based simulation software often employ an iterative approach where in each iteration they change the structure slightly and resimulate.
Standard Finite Element based simulation software is usually not well suited for this workflow, as it restarts in each iteration, even for tiny changes.
In settings with complex local microstructure, where a fine mesh is required to capture the geometric detail,
localized model reduction can improve this workflow.
To this end, we introduce ArbiLoMod, a method which allows fast recomputation after arbitrary local modifications.
It employs a domain decomposition and a localized form of the Reduced Basis Method for model order reduction.
It assumes that the reduced basis on many of the unchanged domains can be reused after a localized change.
The reduced model is adapted when necessary, steered by a localized error indicator.
The global error introduced by the model order reduction is controlled by a robust and efficient localized a posteriori error estimator, certifying the quality of the result.
We demonstrate ArbiLoMod for a coercive, parameterized example with changing structure.
\end{abstract}


\begin{keywords}
model order reduction, reduced basis method, domain decomposition, a posteriori error estimation
\end{keywords}

\begin{AMS}65N55, 65N30\end{AMS}

\pagestyle{myheadings}
\thispagestyle{plain}
\markboth{A. BUHR AND C. ENGWER AND M. OHLBERGER AND S. RAVE}{ARBILOMOD}

\section{Introduction}
Finite Element based simulation is a standard tool
in many CAD/CAE assisted workflows in engineering.
Depending on the complexity of the design simulated,
on the underlying partial differential equation,
and on the desired fidelity of the approximation of the solution,
performing a simulation may take hours, days or even weeks.

ArbiLoMod aims at the acceleration of 
a very specific class
of problems, namely the 
repetitive simulation of
parameterized problems
with fine microstructure
without scale separation:
%
Problems which exhibit a microstructure
on a scale much smaller than the domain require a very fine
mesh to resolve the geometry and thus take very long to compute.
They often have much more degrees of freedom than necessary for 
the description of their physical behavior,
thus model order reduction can succeed.
For problems with scale separation, there are established methods
to reduce the model such as
MsFEM \cite{Hou97amultiscale}, HMM \cite{E02theheterogeneous}, VMM \cite{HUGHES19983}, or LOD \cite{Maalqvist2014}.
However, when there is no scale separation,
homogenization is not possible. 
ArbiLoMod is based on a localized RB approach which does not require scale separation.

In addition, engineers often want
to change the problem definition and resimulate several times
in an iterative process.
Their goal is not to come from a
problem definition to an approximation of the solution,
but to provide approximations to the solutions
of a sequence of problems, where each problem was created
by modifying the previous problem.
We assume that changes might be arbitrary, i.e. non parametric, but are of local nature, so that the majority
of the problem setting (e.g.\ large parts of the geometry) is unchanged between two simulation runs.
ArbiLoMod exploits the similarities between subsequent problems
by reusing local approximation spaces in regions of the computation domain unaffected by the change.

Furthermore, for many design applications in each iteration, the structure under 
consideration has to be simulated for a multitude of model parameters
to analyze its behavior. ArbiLoMod includes online offline
decomposition techniques from RB methods
\cite{HesthavenRozzaEtAl2016,QuarteroniManzoniEtAl2016,Ha14},
using the regularity structure of the solution manifold \cite{Binev2011}
for fast many-query simulation of the model.

%
%

A particular example that we have in mind is the design of printed circuit boards (PCBs).
The design of PCBs has all of the above mentioned properties:
PCBs are nowadays very complex, there is no
scale separation, improvements are often obtained by
local changes of the electronic components and conductive tracks,
and when solved in frequency domain, it is a parameterized problem
with the frequency as a parameter.
A possible change is depicted in Figure
\ref{fig:boards}.
The same applies to integrated circuit (IC) packages, which are in structure similar to PCBs.
Other areas of application could be e.g. resonance analysis in cars or trains
or electromagnetic filter design. 
\begin{figure}
\centering
\includegraphics[width=.9\textwidth,viewport=0 220 1476 470, clip=true]{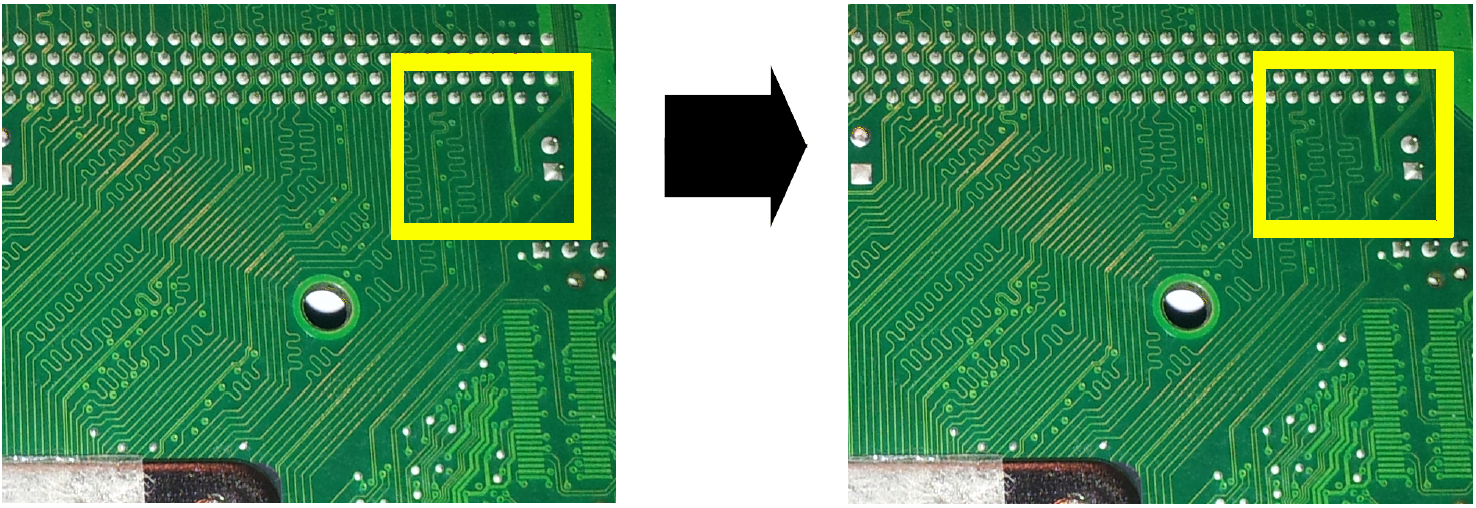}
\caption{DDR memory channel on a printed circuit board subject to local modification of conductive tracks.}
\label{fig:boards}
\end{figure}

ArbiLoMod's goal is the reduction of overall simulation times.
When measuring simulation times, 
we assume that the runtime on a single workstation is not the right
quantity to look at. Instead, the runtime on any
hardware easily available to the user is what matters, which particularly
includes massive computing power in cluster and cloud environments,
but usually not millions of cores as in supercomputers.
As the user still has to pay per compute node in cloud environments,
hundreds to thousands of compute nodes is the foreseen
environment.
Secondary goals during the development of ArbiLoMod were
that the method should be easily implementable
on top of existing finite element schemes and
that the detection of
changed regions vs.\ unchanged regions between two simulation
runs is completely automatic.

After an overview over existing methods in the literature,
the definition of the problem setting and a short overview
of ArbiLoMod, the structure of the paper follows the structure of ArbiLoMod:
In Section \ref{sec:space_decomposition}
the space decomposition used in this paper is presented. 
Training and Greedy algorithms for local basis generation are subject of Section 
\ref{sec:training_and_greedy}. 
The a posteriori error estimator employed 
is discussed Section \ref{sec:a_posteriori}. 
Localized enrichment of the bases 
is described in Section \ref{sec:enrichment}. 
The procedure followed on each geometry change 
is given in Section \ref{sec:onchange}.
Potentials for parallelization are sketched in Section
\ref{sec:runtime_and_communication}. 
Section \ref{sec:numerical_experiments}
contains numerical results.

\subsection*{Existing Approaches}
The combination of ideas of the fields of Reduced Basis Methods,
of multiscale methods and domain decomposition methods
gained a lot of attention in recent time.
In 2002, Maday and R\o nquist
published the ``Reduced Basis Element'' (RBE)
method \cite{Maday2002a,Maday2004},
combining the reduced basis approach with a domain decomposition,
coupling local basis vectors by polynomial Lagrange multipliers on
domain boundaries.
Built on top of the RBE is the ``Reduced Basis Hybrid Method'' (RBHM)
\cite{Iapichino2012} and the ``Discontinuous Galerkin Reduced Basis Element'' method
(DGRBE) \cite{FrancescaAntonietti2015}.
Similar in motivation is the ``Static Condensation Reduced Basis Element''
(SCRBE) method \cite{PhuongHuynh2012}, which also aims at systems
composed of components, where the geometry of the components can
be mapped to reference geometries.
While the connection between the components is simply achieved by
polynomial Lagrange multipliers in the RBE, significant research
has been conducted on choosing the right coupling spaces in
the context of the SCRBE. Choosing the right space at the interfaces
is called ``Port Reduction'' and was included in the name, leading to the
``Port Reduced Static Condensation Reduced Basis Element'' (PR-SCRBE) method
\cite{Eftang2013,Eftang2014} which employs a so-called ``pairwise training''.
A variation of this idea is used in our method.
Recently, also an algorithm to obtain optimal interface spaces
for the PR-SCRBE was proposed \cite{Smetana2015} and a framework
for a posteriori error estimation was introduced \cite{Smetana2015a}.
While the PR-SCRBE performs excellent in using the potentials of cloud
environments, its goals are different from ours.
While being able
to handle various changes to the system simulated,
it does not aim at arbitrary modifications. And it does not try
to hide the localization from the user, but rather exposes it to let
the user decide how the domain decomposition should be done.
%
Another combination of domain decomposition ideas with reduced basis
methods coupling different physical formulations on the domain
boundaries was presented in \cite{Maier2014,Maier2014a}.

A completely different approach to local model order reduction is to
see it as an extension to multiscale methods.
There are two ways to combine RB and multiscale methods.
First is to use RB to accelerate the solution of localized
problems which occur in multiscale methods (``RB within multiscale'').
This has been done by multiple authors, see e.g.
\cite{Abdulle20127014,Abdulle2013203,hesthavenRBMS2015}.
The second way is to use a subspace projection for the global problem,
but use ideas from multiscale methods to construct
the basis functions. This second approach is 
used by ArbiLoMod and is shared with several methods in the
literature:
The ``Generalized Multiscale Finite Element Method'' GMsFEM
\cite{Efendiev2013} uses the idea of
``Multiscale Finite Elements'' MsFEM \cite{Hou97amultiscale} and
constructs reduced spaces which are spanned
by ansatz functions on local patches, using only local
information. It allows for non-fixed
number of ansatz functions on each local patch.
GMsFEM uses local eigenproblems and a partition of unity
for basis generation.
Adaptive enrichment for the GMsFEM is presented by Chung et al.\ in
\cite{Chung2014,Chung2014b} and
online-adaptive enrichment in \cite{Chung2015}.
While an application of the GMsFEM to the problem of
arbitrary local modifications would be very interesting, GMsFEM
was not designed to be communication avoiding. A parallel
implementation of the enrichment described in \cite{Chung2015}
would require the communication of high dimensional basis representations,
which is avoided in ArbiLoMod.

There are also developments of the ``Generalized Finite Element
Method'' GFEM \cite{strouboulis2000design,strouboulis2001generalized}
which could be extended to handle arbitrary local modifications.
Especially the recent development
of the Multiscale-GFEM \cite{babuvska2014machine}
could be promising in this regard.

Similar to ArbiLoMod in spirit is the \! \!
``Localized Reduced Basis Multiscale Method''\!
(LRBMS)
\cite{LRBMSwithOnlineEnrichment,OS15}.
It uses a non overlapping domain decomposition and
discontinuous ansatz spaces, which are coupled using a
DG ansatz at the interface.
While LRBMS could be extended to handle arbitrary local modifications
and it can be implemented in a communication avoiding scheme,
LRBMS cannot easily be implemented on top of an
existing conforming discretization
scheme. ArbiLoMod, in contrast, inherits all conformity properties
from the underlying discretization.
It can be built on top of standard, conforming finite element schemes
and is then conforming itself.
Some of the basic ideas of ArbiLoMod were already published by the authors
\cite{Buhr2009,BO15}. 
First results for electrodynamics were 
published in \cite{BEOR16}.

The main advantage of ArbiLoMod is its speed in cloud environments.
At every design decision during the development of ArbiLoMod, care
was taken to keep the required communication in a parallel implementation
at a minimum. 


\section{Preliminaries}%
\subsection*{Problem Setting}
In this contribution we particularly look at problems that are modeled by 
partial differential equations 
with complex local structure, typically on a very fine scale compared to the overall problem setting. 
In addition, we assume that the problem might depend on a number of parameters $\mu \in \mathcal{P} \subset \R^p$.
As a model problem to explore and design our new simulation technique ArbiLoMod,  we consider 
elliptic equations with complex micro-structure. 
To simplify the presentation, we restrict ourselves to the two dimensional case in this publication.
Thus, let 
$\Omega \subset \R^d$, $d=2$ denote the polygonal
computational domain, and $V$, $H^1_0(\Omega) \subset V \subset H^1(\Omega)$
denote the solution space. We then look at variational problems of the form
\begin{equation}
		{\mathrm{ find }} \ u_\mu \in V \quad {\mathrm{ such\ that }} \qquad a_\mu( u_\mu, v) = \dualpair{f_\mu}{v} \qquad \forall \,v \in V.
\end{equation}
Here, $a_\mu: V \times V \to \R$, $\mu \in \mathcal{P}$ denotes a parameterized
bilinear form with inherent
micro-structure and $f_\mu \in V'$ a force term.
$V$ is equipped with the standard $H^1$ inner product and the thereby
induced norm.
$a_\mu$ is assumed to be coercive and continuous and by
$\alpha_\mu, \gamma_\mu$ we denote the lower (upper) bounds for the coercivity (norm) constants of $a_\mu$, i.e.\ $\alpha_\mu \norm{\varphi}_V^2  \leq a_\mu(\varphi, \varphi)$
for all $\varphi \in V$ and $\norm{a_\mu} \leq \gamma_\mu$.
We further assume that the bilinear form $a_\mu(u,v)$
has a decomposition affine in the parameters and
can be written
as a sum of parameter independent bilinear forms $a^b(u,v)$
with parameter dependent coefficient functions $\theta_b(\mu)$:
\begin{equation}
\label{eq:affine_decomposition}
a_\mu(u,v) = \sum_b \theta_b(\mu) a^b(u,v)
\end{equation}
As an example $a_\mu$ could be given as
\begin{equation}
\label{eq:heat_equation}
		 a_\mu( u, v) = \int_\Omega \sigma_\mu(x) \nabla u(x)  \nabla v(x) {\mathrm{dx}}
\end{equation}
where $\sigma_\mu: \Omega \to \R$ denotes a parameterized heat conduction 
coefficient that varies in space
on a much finer scale then the length scale of $\Omega$.


\subsection*{Structure of ArbiLoMod}
The main ingredients of ArbiLoMod are:
\begin{enumerate}[topsep=0pt,itemsep=-1ex,partopsep=1ex,parsep=1ex]
\item a localizing space decomposition,
\item local training and greedy algorithms,
\item a localized a posteriori error estimator,
\item a localized adaptive enrichment procedure.
\end{enumerate}
ArbiLoMod builds upon a space decomposition of the original ansatz space $V$ consisting
of a set of subspaces $V_i \subset V$ whose direct sum is the
original ansatz space, i.e.
\begin{equation}
V = \bigoplus_i \subs_i.
\end{equation}
The subspaces $V_i$ are meant to have local properties, i.e. all
elements of one subspace $V_i$ have support only in a small subset
of the domain.
In the first step, for each local subspace $V_i$, a reduced
local subspace $\widetilde V_i \subseteq V_i$ is constructed using
training and greedy algorithms.
Thereafter, the global problem is solved in
the space formed by the direct sum of all reduced local subspaces:
\begin{eqnarray}
&&{\mathrm{ with}} \qquad \widetilde V := \bigoplus_i \widetilde V_i\\
\label{eq:red_problem}
&&		{\mathrm{ find }} \qquad  \widetilde u_\mu \in \widetilde V \quad {\mathrm{ such\ that }} \qquad a_\mu( \widetilde u_\mu, v) = \dualpair{f_\mu}{v} \qquad \forall \,v \in \widetilde V.
\nonumber
\end{eqnarray}
To assess the quality of the thus obtained solution,
 a localized a posteriori error estimator is employed.
If necessary, the solution is improved by enriching the reduced
local subspaces, using a residual based, localized enrichment procedure.
Finally, on each localized change, affected bases are discarded 
and the procedure starts over.
An overview is given in Figure~\ref{fig:workflow}.
\begin{figure}
\centering
\includegraphics[width=.7\textwidth]{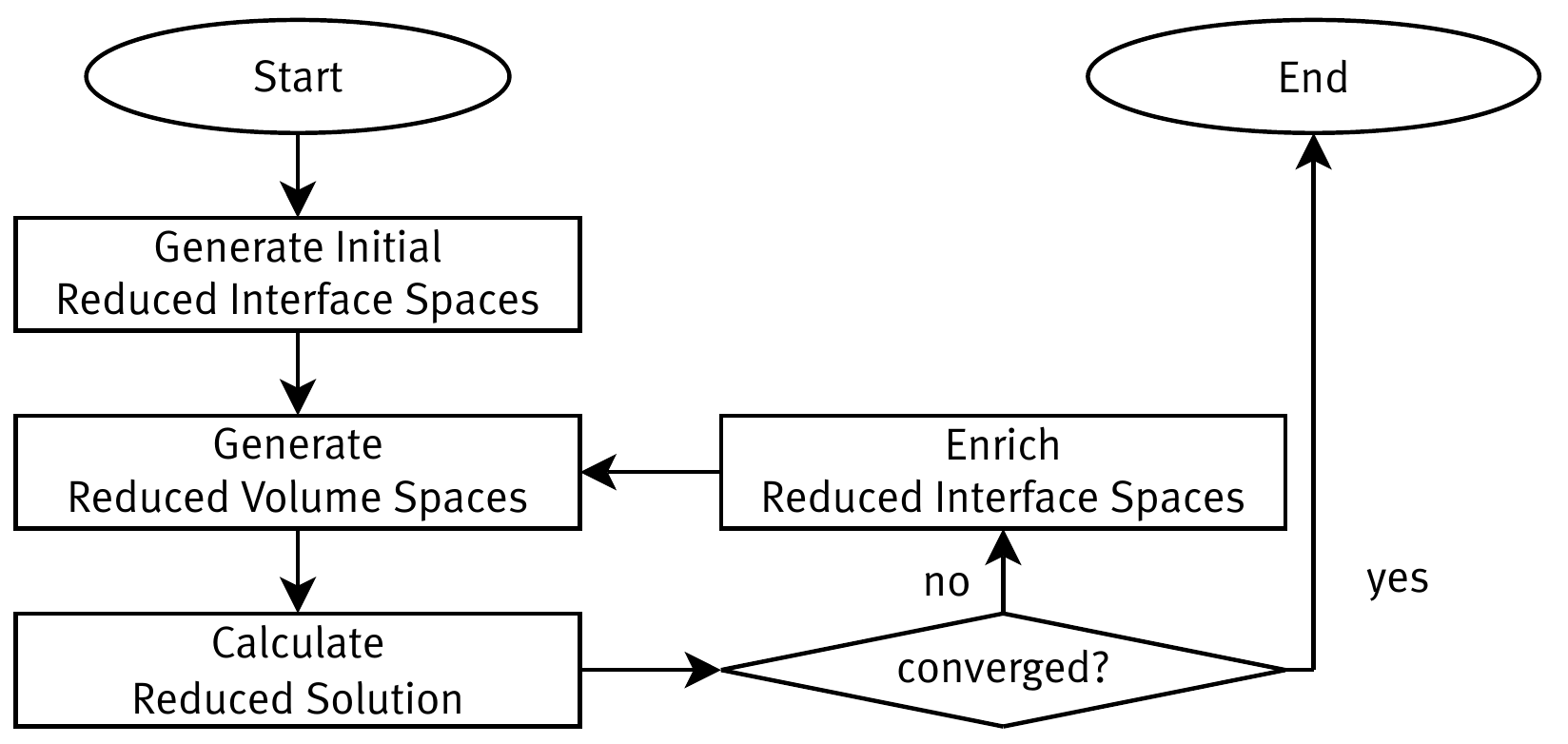}
\caption{Overview of ArbiLoMod.
Generation of initial reduced interface spaces by training
and of reduced volume spaces by greedy basis generation is subject of Section \ref{sec:training_and_greedy}.
Convergence is assessed with the localized a posteriori error estimator presented in Section \ref{sec:a_posteriori}.
Enrichment is discussed in Section \ref{sec:enrichment}.
On each geometry change, the procedure starts over,
where new initial interface spaces are only generated in the region
affected by the change.
}
\label{fig:workflow}
\end{figure}

\section{Space Decomposition}
\label{sec:space_decomposition}
In this section we introduce definitions for the different kinds of
spaces needed in the method.
Since ArbiLoMod is designed to work on top of existing discretization schemes,
we formulate the method on a discrete level.
The following definitions are for two dimensional problems, but
can be easily extended for three dimensional problems.
\subsection{Basic Subspaces}
$V_h$ denotes a discrete ansatz space, spanned by ansatz functions
$\{\psi_i\}_{i=1}^N =: \basis$.
We assume that
the ansatz functions have a localized support, which is true
for many classes of ansatz functions like Lagrange- or
N\'ed\'elec-type functions.
We first introduce a direct decomposition of the ansatz
space $V_h$ into ``basic subspaces''. These are used to construct
the space decomposition later.
To obtain the subspaces we classify the
ansatz functions by their support and define each subspace as the
span of all ansatz functions of one class.
To this end, we introduce a non overlapping domain decomposition of the
original domain $\Omega$ into open subdomains $\Omega_i$:
\begin{equation}
\overline \Omega = \bigcup_{i=1}^{N_D}\overline \Omega_i  \qquad \Omega_i \cap \Omega_j = \emptyset\ \mathrm{for}\ i\ne j
\label{eq:dd}
\end{equation}
where $N_D$ is the number of subdomains.
For each $\psi$ in $\basis$, we call $\domains_\psi$
the set
of indices of subdomains that have non-empty intersection with the support of $\psi$, i.e.
\begin{equation}
\domains_\psi :=
\Big\{i \in \{1, \dots, N_D\} \ \Big| \ \supp(\psi) \cap \Omega_i \ne \emptyset \Big\}.
\end{equation}
We collect all occurring domain sets in $\Upsilon$:
\begin{equation}
\Upsilon := \Big\{ \domains_\psi , \psi \in \basis \Big\}.
\end{equation}
We define sets for each
codimension:
\begin{equation}\label{def:codimsets}
\Upsilon_0 := \Big\{ \xi \in \Upsilon \ \Big| \ |\xi| = 1\Big\},\
\Upsilon_1 := \Big\{ \xi \in \Upsilon \ \Big| \ |\xi| = 2\Big\},\
\Upsilon_2 := \Big\{ \xi \in \Upsilon \ \Big| \ |\xi| > 2\Big\}.
\end{equation}
The elements of $\Upsilon_2$,$\Upsilon_1$, and $\Upsilon_0$ can be associated with interior vertices
(codimension 2), faces (codimension 1) and
cells (codimension 0) of the domain decomposition.
The classification is very similar
to the classification of mesh nodes in domain decomposition methods, see
for example
\cite[Def. 3.1]{klawonn2006dual}
or \cite[Def. 4.2]{toselli2005domain}. 
\begin{definition}[Basic Subspaces]
\label{def:basic_space_def}
For each element $\xi \in \Upsilon$  we define a basic subspace $\subsbd_\xi$ of $V$ as:
\begin{equation}
\subsbd_\xi := \spanset\Big\{\psi \in \basis \ \Big| \ \domains_\psi = \xi \Big\}.
\nonumber
\end{equation}
\end{definition}
\begin{remark}[Basic Decomposition]
The definition of $\subsbd_\xi$ induces a direct decomposition of $V_h$:
\begin{equation}
V_h = \bigoplus_{\xi \in \Upsilon} \subsbd_\xi.
\nonumber
\end{equation}
\end{remark}
\subsection{Space Decomposition}
\label{sec:decomposition}
As mentioned in the introduction, ArbiLoMod is based on a space
decomposition. It can work on the Basic Decomposition
introduced in Definition \ref{def:basic_space_def}. However,
faster convergence and smaller basis sizes are achieved using the
modified space decomposition introduced in this section.
Here we assume that the discrete ansatz space $V_h$ is spanned
by finite element ansatz functions defined on a mesh which 
resolves the subdomains $\Omega_i$.

For each of the spaces $\subsbd_\xi$ defined in Definition \ref{def:basic_space_def},
we calculate extensions. The extensions are computed on the
``extension space'' $\extension{\subsbd_\xi}$ which is defined as
\begin{equation}
\label{eq:definition_extension_space}
\extension{\subsbd_\xi} := \bigoplus \Big\{\subsbd_\zeta \ \Big| \ \zeta \subseteq \xi \Big\}.
\end{equation}
\begin{figure}
\footnotesize
\centering
\def\myscale{1.2}
\def\circsize{0.06}
\def\circsizetwo{0.09}
\begin{tikzpicture}[scale=\myscale]
\node at (0.5+0.125,0.5) [color=Xred] {\Huge 1};
\draw [step=2.5mm,color=gray,very thin] (-0.15,-0.15) grid (1.15,1.15);
\draw [step=1cm,black,line width=0.7mm] (-0.15,-0.15) grid (1.15,1.15);
\foreach \x in {1, ..., 3} {
  \foreach \y in {1, ..., 3} {
    \draw (\x * 0.25, \y * 0.25) circle(\circsizetwo);
  }
}
\foreach \x in {1, ..., 3} {
  \foreach \y in {1, ..., 3} {
    \draw [fill=black] (\x * 0.25, \y * 0.25) circle(\circsize);
  }
}
\node at (0.5,-0.5) {$\xi = \{1\} \in \Upsilon_0$};
\end{tikzpicture}
\begin{tikzpicture}[scale=\myscale]
\node at (0.5+0.125,0.5) [color=Xred] {\Huge 1};
\node at (1.5+0.125,0.5) [color=Xred] {\Huge 2};
\draw [step=2.5mm,color=gray,very thin] (-0.15,-0.15) grid (2.15,1.15);
\draw [step=1cm,black,line width=0.7mm] (-0.15,-0.15) grid (2.15,1.15);
\foreach \x in {1, ..., 7} {
  \foreach \y in {1, ..., 3} {
    \draw (\x * 0.25, \y * 0.25) circle(\circsizetwo);
  }
}
\foreach \x in {4} {
  \foreach \y in {1, ..., 3} {
    \draw [fill=black] (\x * 0.25, \y * 0.25) circle(\circsize);
  }
}
\node at (1,-0.5) {$\xi = \{1,2\} \in \Upsilon_1$};
\end{tikzpicture}
\begin{tikzpicture}[scale=\myscale]
\node at (0.5+0.125,1.5) [color=Xred] {\Huge 1};
\node at (1.5+0.125,1.5) [color=Xred] {\Huge 2};
\node at (0.5+0.125,0.5) [color=Xred] {\Huge 3};
\node at (1.5+0.125,0.5) [color=Xred] {\Huge 4};
\draw [step=2.5mm,color=gray,very thin] (-0.15,-0.15) grid (2.15,2.15);
\draw [step=1cm,black,line width=0.7mm] (-0.15,-0.15) grid (2.15,2.15);
\foreach \x in {1, ..., 7} {
  \foreach \y in {1, ..., 7} {
    \draw (\x * 0.25, \y * 0.25) circle(\circsizetwo);
  }
}
\foreach \x in {4} {
  \foreach \y in {4} {
    \draw [fill=black] (\x * 0.25, \y * 0.25) circle(\circsize);
  }
}
\node at (1,-0.5) {$\xi = \{1,2,3,4\} \in \Upsilon_2$};
\end{tikzpicture}
\begin{tikzpicture}[scale=\myscale]
\draw [fill=black] (0.15,1.2) circle(\circsize);
\node at (0.4,1.2) [right] {dof of $\subsbd_\xi$};
\draw (0.15,0.9) circle(\circsizetwo);
\node at (0.4,0.9) [right] {dof of $\extension{\subsbd_\xi}$};
\draw [color=gray, very thin] (0,0.6) -- (0.3,0.6);
\node at (0.4,0.6) [right] {mesh line};
\draw [line width=0.7mm] (0,0.3) -- (0.3,0.3);
\node at (0.4,0.3) [right] {domain boundary};
\node at (0.15,0) [color=Xred] {\huge 1};
\node at (0.4, 0) [right] {domain number};
\node at (0,-1) {};
\end{tikzpicture}
\caption{Visualization of basic spaces $U_\xi$ and their extension spaces for $Q^1$ ansatz functions
(one dof per mesh node).}
\label{fig:extension_space}
\end{figure}
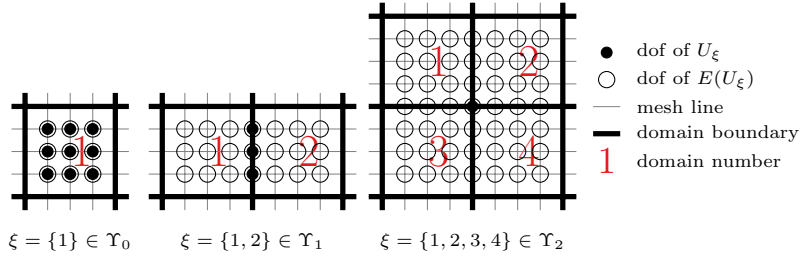

Examples for extension spaces are given in Figure~\ref{fig:extension_space}.
For each space $\subsbd_\xi$, a linear extension operator $\extend$ is defined:
\begin{equation}
\extend : \subsbd_\xi \rightarrow \extension{\subsbd_\xi}.
\end{equation}
For all $\xi$ in $\Upsilon_0$, $\extend$ is just the identity.
For all $\xi$ in $\Upsilon_1$, we extend by solving the
homogeneous version of the equation with Dirichlet zero
boundary values for one (arbitrary) chosen
$\defaultparameter \in \parspace$.
For example, in the situation depicted in Fig. \ref{fig:extended_base_1},
\begin{eqnarray*}
\extend : \subsbd_{\{1,2\}} &\rightarrow& \subsbd_{\{1\}} \oplus \subsbd_{\{1,2\}} \oplus \subsbd_{\{2\}}\\
\varphi  &\mapsto& \varphi + \psi\\
&&\mbox{where } \psi \in \subsbd_{\{1\}} \oplus \subsbd_{\{2\}} \mbox{ solves}\\
&&
 a_{\defaultparameter}(\varphi + \psi, \phi) = 0 \qquad \forall \phi \in \subsbd_{\{1\}} \oplus \subsbd_{\{2\}}
\end{eqnarray*}
For all $\xi$ in $\Upsilon_2$, $\extend$ is defined by first
extending linearly to zero on all edges in the extension domain, i.e.~in all spaces in $\extension{\subsbd_\xi}$
which belong to $\Upsilon_1$. Then, in a second step, the homogeneous version
of the equation with Dirichlet boundary values is solved on the spaces in
$\extension{\subsbd_\xi}$ which belong to $\Upsilon_0$.
The procedure is visualized in Figure~\ref{fig:vertexextension}.
The functions constructed by this two step procedure are
the same as the MsFEM basis functions used by Hou and Wu \cite{Hou97amultiscale}.
Note that the base functions for $\extension{\subsbd_\xi}, \xi \in \Upsilon_2$ form a partition 
of unity in the interior of the coarse partition of the domain. They can be completed to form 
a partition of unity on the whole coarse partition of the domain if suitable base functions for 
the vertices at the boundary of the domain are added. 
This will be used in Section \ref{sec:a_posteriori} below for 
the robust and efficient localization of an a posteriori error estimator. 
Examples of extended functions for all codimensions are given in Figure~\ref{fig:extended_base}.
In the case of the Laplace equation these basis functions coincide with the hat functions on the 
coarse partition (see Figure~\ref{fig:vertexextension}).  
As discussed in Section \ref{sec:a_posteriori} below, the choice of hat function can be an alternative 
choice that allows for a better a priori bound of the constants in the localized a posteriori error estimator, 
as their gradient is controled by $1/H$ -- where $H$ denotes the mesh size of the macro partition -- independent of the contrast of the data.
\newlength{\foobar}
\setlength{\foobar}{2cm}
\def\myscale{1.2}
\def\circsize{0.06}
\def\circsizetwo{0.08}
\begin{figure}
\footnotesize
\centering
\begin{subfigure}[b]{0.19\textwidth}
\centering
\begin{tikzpicture}[scale=\myscale]
\draw [step=0.4mm,color=gray,very thin] (-0.15,-0.15) grid (1.15,1.15);
\draw [step=1cm,black,line width=0.7mm] (-0.15,-0.15) grid (1.15,1.15);
\node at (0.5,0.5) {\includegraphics[width=\myscale cm]{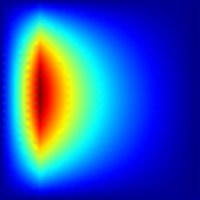}};
\end{tikzpicture}
\caption{$\xi = \{1\} \in \Upsilon_0$}
\label{fig:extended_base_0}
\end{subfigure}
\begin{subfigure}[b]{0.24\textwidth}
\centering
\begin{tikzpicture}[scale=\myscale]
\draw [step=0.4mm,color=gray,very thin] (-0.15,-0.15) grid (2.15,1.15);
\draw [step=1cm,black,line width=0.7mm] (-0.15,-0.15) grid (2.15,1.15);
\node at (1,0.5) {\includegraphics[width=\myscale\foobar]{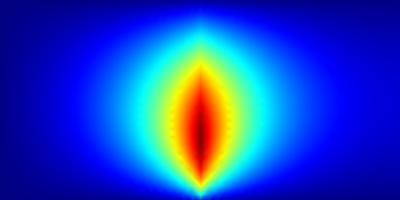}};
\end{tikzpicture}
\caption{$\xi = \{1,2\} \in \Upsilon_1$}
\label{fig:extended_base_1}
\end{subfigure}
\begin{subfigure}[b]{0.26\textwidth}
\centering
\begin{tikzpicture}[scale=\myscale]
\draw [step=0.4mm,color=gray,very thin] (-0.15,-0.15) grid (2.15,2.15);
\draw [step=1cm,black,line width=0.7mm] (-0.15,-0.15) grid (2.15,2.15);
\node at (1,1) {\includegraphics[width=\myscale\foobar]{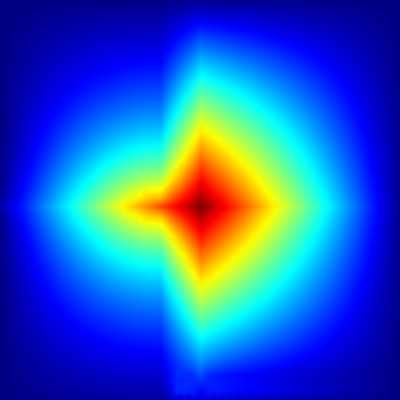}};
\end{tikzpicture}
\caption{$\xi = \{1,2,3,4\} \in \Upsilon_2$}
\label{fig:extended_base_2}
\end{subfigure}
\begin{subfigure}[b]{0.26\textwidth}
\centering
\begin{tikzpicture}[scale=\myscale]
\draw [line width=0.6mm] (0,0) -- (0.3,0) node[right]{domain \!boundary};
\draw [color=gray, very thin] (0,0.3) -- (0.3,0.3) node[right,color=black]{mesh line};
\node at (0,-1) {};
\end{tikzpicture}
\end{subfigure}
\caption{
Visualization of some example elements of the local subspaces $\subssd_\xi$
for inhomogeneous coefficients.
The structure in the solution results from variations in the
heat conduction coefficient.
}
\label{fig:extended_base}
\end{figure}

\begin{figure}
\centering
\begin{subfigure}[b]{0.3\textwidth}
\centering
\includegraphics[width=\textwidth]{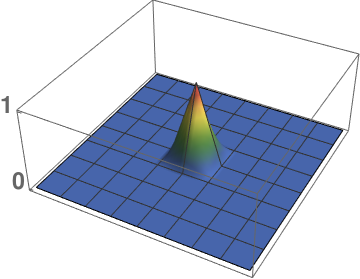}
\caption{Value 1 in $U_{\{1,2,3,4\}}$}
\label{fig:vertexextension1}
\end{subfigure}
\begin{subfigure}[b]{0.3\textwidth}
\centering
\includegraphics[width=\textwidth]{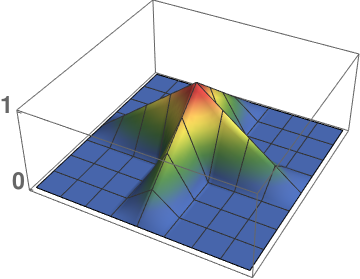}
\caption{Linear in $U_\xi, \ \xi \in \Upsilon_1$}
\label{fig:vertexextension2}
\end{subfigure}
\begin{subfigure}[b]{0.3\textwidth}
\centering
\includegraphics[width=\textwidth]{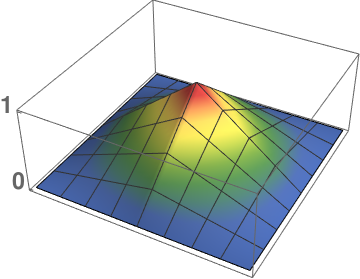}
\caption{Solving in $U_\xi, \ \xi \in \Upsilon_0$}
\label{fig:vertexextension3}
\end{subfigure}
\caption{
$\mathrm{Extend}$ operator is executed in two steps for spaces $V_\xi, \xi \in \Upsilon_2$. 
Exemplified for the generation of $V_{\{1,2,3,4\}}$ from $U_{\{1,2,3,4\}}$. Mesh and spaces as depicted in Fig \ref{fig:extension_space}.
Script: \texttt{generate\_vertex\_extension\_plots.nb}.
}
\label{fig:vertexextension}
\end{figure}

For the communication avoiding properties of the ArbiLoMod, it is
important to note that extensions can be calculated independently on
each domain, i.e. $\extend(\varphi)|_{\Omega_i}$ can be calculated
having only information about $\varphi$ and $\Omega_i$, without
knowledge about other domains.
Using this operator, we define the local subspaces $\subssd_\xi$:

\begin{definition}[Extended Subspaces]
\label{def:extended_space_def}
For each element $\xi \in \Upsilon$  we define an extended subspace $\subssd_\xi$ of $V_h$ as:
\begin{equation}
\label{eq:space_decomposition}
\subssd_\xi := \Big\{ \extend (\varphi) \ \Big| \ \varphi \in \subsbd_\xi \Big\}.
\nonumber
\end{equation}
According to the definition in \eqref{def:codimsets} we call $\subssd_\xi $ a cell, face, or vertex space,
if $\xi \in \Upsilon_0, \xi \in \Upsilon_1$, or $\xi \in \Upsilon_2$, respectively (cf.\ Fig. \ref{fig:extended_base}).
\end{definition}
\begin{remark}[Extended Decomposition]
The definition of $\subssd_\xi$ induces a direct decomposition of $V_h$:
\begin{equation}
V_h = \bigoplus_{\xi \in \Upsilon} \subssd_\xi.
\nonumber
\end{equation}
\end{remark}
~\\[-1ex]
\noindent Space decompositions of the same spirit are used
in the context of Component Mode Synthesis
(CMS), see \cite{Hetmaniuk2010,hetmaniuk2014error}.
\subsection{Projections}
\label{sec:projection_operators}

\begin{definition}[Local Projection Operators]
\label{def:local_projection_operators}
The projection operators
$\subsmap_{\subsbd_\xi} : V_h \rightarrow \subsbd_\xi$
and
$\subsmap_{\subssd_\xi} : V_h \rightarrow \subssd_\xi$
are defined by the relation
\begin{equation}
\varphi =
\sum_{\xi \in \Upsilon} \subsmap_{\subsbd_\xi}(\varphi)
= \sum_{\xi \in \Upsilon} \subsmap_{\subssd_\xi}(\varphi)
\qquad \forall \varphi \in V_h. \nonumber
\end{equation}
As both the subspaces $\subsbd_\xi$ and the subspaces $\subssd_\xi$ form
a direct decomposition of the space $V_h$, the projection operators are
uniquely defined by this relation.
\end{definition}

The implementation of the projection operators
$\subsmap_{\subsbd_\xi}$ is very easy: It is just extracting
the coefficients of the basis functions forming $\subsbd_\xi$ out
of the global coefficient vector.
The implementation of the projection operators $\subsmap_{\subssd_\xi}$
is more complicated and involves the solution of local problems,
see Algorithm \ref{algo:projections}.
\begin{algorithm2e}
\DontPrintSemicolon
\SetAlgoVlined
\SetKwFunction{SpaceDecomposition}{SpaceDecomposition}
\SetKwInOut{Input}{Input}
\SetKwInOut{Output}{Output}
\Fn{\SpaceDecomposition{$\varphi$}}{
  \Input{function $\varphi \in V_h$}
  \Output{decomposition of $\varphi$}
  \tcc{iterate over all codimensions in decreasing order}
  \For{
    $\mathrm{codim} \in \{d, \dots, 0\}$
  }%
  {
    \For{
        $\xi \in \Upsilon_\mathrm{codim}$
    }
    {
        $\varphi_\xi \leftarrow \mathrm{Extend}(\subsmap_{\subsbd_\xi}(\varphi))$\;
        $\varphi \leftarrow \varphi - \varphi_\xi$
    }
  }
  \Return $\{\varphi_\xi\}$\;
}
\caption{Projections in $\subssd_\xi$}
\label{algo:projections}
\end{algorithm2e}



\section{Local Basis Generation}
\label{sec:training_and_greedy}
For each local subspace $\subssd_\xi$, an initial reduced local
subspace $\rsubssd_\xi \subseteq \subssd_\xi$ is generated, using
only local information from an environment around the support of the
elements in $\subsbd_\xi$. The strategy used to construct
these
reduced local subspaces depends on the type of the
space: whether $\xi$ belongs to $\Upsilon_0$, $\Upsilon_1$ or
$\Upsilon_2$. The three strategies are given in the
following.
The local basis generation algorithms
can be run in parallel, completely independent of each other.
See Section \ref{sec:runtime_and_communication} for further
discussion of the potential parallelization.
As the algorithms only use local information, their results do not change
when the problem definition is changed outside of the area they took into
account. So there is no need to rerun the algorithms in this case.
Our numerical results indicate that the spaces obtained by
local trainings and greedys have good approximation properties
(see Section \ref{sec:numerical_experiments}).
The quality of the obtained solution will be guaranteed by
the a posteriori
error estimator presented in Section \ref{sec:a_posteriori} below.
\subsection{Basis Construction for Reduced Vertex Spaces}
\label{sec:codim_2_spaces}
The spaces $\subssd_\xi$ for $\xi \in \Upsilon_2$ are spanned by
only one function (see Figure~\ref{fig:extended_base_2} for an example)
and are thus one dimensional. The reduced spaces
are therefore chosen to coincide with the original space, i.e.
$
\widetilde \subssd_\xi := \subssd_\xi, \forall \xi \in \Upsilon_2.
$
\subsection{Local Training for Basis Construction of Reduced Face Spaces}
\label{sec:codim_n_training}
To generate an initial reduced local subspace
for $\subssd_\xi, \ \xi \in \Upsilon_1$
we use a local training procedure.
Its main four steps are to
\begin{enumerate}[topsep=0pt,itemsep=-1ex,partopsep=1ex,parsep=1ex]
\item solve the equation on a small domain around the space in question
with zero boundary values for all parameters in the training set $\Xi$,
\item
solve the homogeneous equation
repeatedly on a small domain around the space in question with
random boundary values for all parameters in $\Xi$,
\item
apply the space decomposition to all obtained local solutions
to obtain the part belonging to the space in question
and
\item
use a greedy procedure to create a space approximating this set.
\end{enumerate}
The complete algorithm is given in Algorithm \ref{algo:training}
and explained below.

The training is inspired by the ``Empirical Port Reduction'' introduced in
Eftang et al.\ \cite{Eftang2013} but differs in some key points.
The main differences are:
(1)~%
Within~\cite{Eftang2013}, the trace of solutions at the interface to be trained is used.
This leads to the requirement that interfaces between domains do not intersect.
In ArbiLoMod, a space decomposition is used instead. This allows ports to intersect,
which in turn allows the decomposition of space into domains.
(2)~%
The ``Empirical Port Reduction'' trains with a pair of domains. We use an environment
of the interface in question, which contains six domains in the 2D case.
In 3D, it contains 18 domains.
(3)~%
PR-SCRBE aims at providing a library of domains
which can be connected at their interfaces. The reduced interface spaces
are used in different domain configurations and have to be valid in all of them.
Within the context of ArbiLoMod, no database of domains is created and 
the interface space is constructed only for the configuration at hand,
which simplifies the procedure.
(4)~%
The random boundary values used in \cite{Eftang2013} are generalized
Legendre polynomials with random coefficients.
In ArbiLoMod, the finite element basis functions with random coefficients
are used, which simplifies the construction greatly, especially when there
is complex structure within the interface.

\subsubsection*{The Training Space}
\begin{figure}
\centering
\def\myscale{1.3}
\def\circsize{0.06}
\def\circsizetwo{0.09}
\begin{tikzpicture}[scale=\myscale]
\node at (0.5+0.125,1.5) [color=Xred] {\Huge 1};
\node at (1.5+0.125,1.5) [color=Xred] {\Huge 2};
\node at (2.5+0.125,1.5) [color=Xred] {\Huge 3};
\node at (0.5+0.125,0.5) [color=Xred] {\Huge 4};
\node at (1.5+0.125,0.5) [color=Xred] {\Huge 5};
\node at (2.5+0.125,0.5) [color=Xred] {\Huge 6};
\draw [step=2.5mm,color=gray,very thin] (-0.2,-0.2) grid (3.2,2.2);
\draw [step=1cm,black,line width=0.7mm] (-0.2,-0.2) grid (3.2,2.2);
\foreach \x in {1, ..., 11} {
  \foreach \y in {1, ..., 7} {
    \draw (\x * 0.25, \y * 0.25) circle(\circsizetwo);
  }
}
\foreach \x in {5, ..., 7} {
  \foreach \y in {4} {
    \draw [fill=black] (\x * 0.25, \y * 0.25) circle(\circsize);
  }
}
\foreach \x in {1, ..., 11} {
  \foreach \y in {0,8} {
    \draw [Xred,fill=Xred!60] (\x * 0.25, \y * 0.25) circle(\circsize);
  }
}
\foreach \x in {0,12} {
  \foreach \y in {0, ..., 8} {
    \draw [Xred,fill=Xred!60] (\x * 0.25, \y * 0.25) circle(\circsize);
  }
}
\node at (1.5,-0.5) {$\xi = \{2,5\} \in \Upsilon_1$};
\end{tikzpicture}
\begin{tikzpicture}[scale=\myscale]
\draw [fill=black] (0.15,1.5) circle(\circsize);
\node at (0.4,1.5) [right] {dof of $\subsbd_\xi$};
\draw (0.15,1.2) circle(\circsizetwo);
\node at (0.4,1.2) [right] {dof of $\training{\subsbd_\xi}$};
\draw (0.15,0.9) [Xred!,fill=Xred!60] circle(\circsize);
\node at (0.4,0.9) [right]{dof of $\coupling{\training{\subsbd_\xi}}$};
\draw [color=gray, very thin] (0,0.6) -- (0.3,0.6);
\node at (0.4,0.6) [right] {mesh line};
\draw [line width=0.7mm] (0,0.3) -- (0.3,0.3);
\node at (0.4,0.3) [right] {domain boundary};
\node at (0.15,0) [color=Xred] {\huge 1};
\node at (0.4, 0) [right] {domain number};
\node at (0,-.9) {};
\end{tikzpicture}
\caption{Visualization of basic spaces $U_{\{2,5\}}$, its training space, and the coupling space of its training space for $Q^1$ ansatz functions
(one dof per mesh node).}
\label{fig:training_space}
\end{figure}
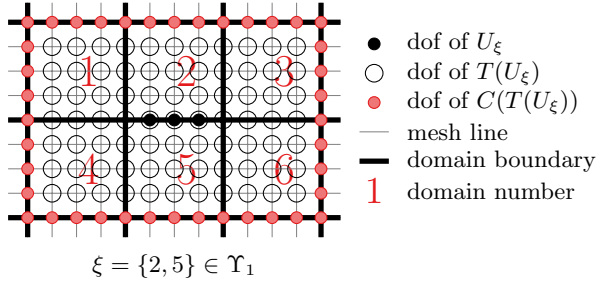

To train a basis for $\widetilde \subssd_\xi$, $\xi \in \Upsilon_1$, 
we start
from the subspace $\subsbd_\xi$.
For each subspace $\subsbd_\xi$, we define a corresponding training
space $\training{\subsbd_\xi}$ on an environment
associated with the face $\xi$.
The following definition is geometric
and tailored to a domain decomposition in rectangular domains. 
More complex domain decompositions would need a more complex definition here.
We define the neighborhood
\begin{equation}
\neigh_\xi := \Big\{i \in \{1, \dots, N_D\} \ \Big| \ \overline{\Omega}_i \cap \Bigl( \bigcap_{k \in \xi} \overline{\Omega}_k \Bigr) \ne \emptyset \Big\}
\end{equation}
and with that the training spaces
\begin{equation}
\training{\subsbd_\xi} := \bigoplus \Big\{\subsbd_\zeta \ \Big| \ \zeta \subseteq \neigh_\xi \Big\}.
\end{equation}
The training space is coupled to the rest of the system via its
coupling space
\begin{equation}
\coupling{\training{\subsbd_\xi}}
:=
\bigoplus \Big\{\subsbd_\zeta \ \Big| \ \zeta \cap \neigh_\xi \ne \emptyset , \zeta \nsubseteq \neigh_\xi \Big\}.
\end{equation}
A sketch of the degrees of freedom associated with the respective spaces is given in Figure~\ref{fig:training_space}.
These definitions are also suitable in the 3D case.
We have fixed the size of the neighborhood to one domain from the interface in question in each direction.
This facilitates the setup of local problems and the 
handling of local changes: After a local change, the affected domains are determined.
Afterwards, all trainings have to be redone 
for those spaces which contain an affected domain in their training domain.
While a larger or smaller training domain might be desirable in some cases 
(see \cite{henning_oversampling}), it is not
necessary:
As missing global information is added in the enrichment step,
ArbiLoMod always converges to the desired accuracy,
even if the training domain is not of optimal size. So the advantages of having
the size of the training domain fixed to one domain outweighs its drawbacks.

The reduced basis must be rich enough to
handle two types of right hand sides up to a given accuracy $\varepsilon_\mathrm{train}$:
\begin{inparaenum}[(a)]
\item source terms and boundary conditions, and
\item arbitrary values on the coupling interface,
\end{inparaenum}
both in the whole parameter space $\parspace$.
We define an extended parameter
space $\parspace \times \coupling{\training{\subsbd_\xi}}$. For this
parameter space we construct a training space
$\Xi \times G \subset \parspace \times \coupling{\training{\subsbd_\xi}}$, where $G$
denotes an appropriate sampling of
$\coupling{\training{\subsbd_\xi}}$.
We use the finite element
basis $\basis_{\coupling{\training{\subsbd_\xi}}}$ on the coupling space and
generates $M$ random coefficient vectors $r_i$ of size $N_\basis =
\dim({\coupling{\training{\subsbd_\xi}}})$. With this an
individual coupling space function $\varphi \in
\coupling{\training{\subsbd_\xi}}$ is constructed
as
\begin{equation}
\varphi_i = \sum_{j=1}^{N_\basis} r_{ij} \phi_j; \qquad\phi_j \in \basis_{\coupling{\training{\subsbd_\xi}}}.\label{eq:3}
\end{equation}
For our numerical experiments in Section \ref{sec:numerical_experiments},
we use uniformly distributed random coefficients over the
interval $[-1,1]$ and Lagrange basis functions.
For each $\mu \in \Xi$ and each pair $(\mu,g_c) \in \Xi \times G$
we construct snapshots $u_f$ and $u_c$ as solutions for
right hand sides $f_\mu(.)$, $a_{\mu}(g_c,.)$ respectively, i.e.
\begin{align}
  a_{\mu}(u_f,\phi) &= \dualpair{f_\mu}{\phi} \qquad \forall \phi \in \training{\subsbd_\xi},\\
  a_{\mu}(u_c,\phi) & = - a_{\mu}(g_c,\phi) \quad \forall \phi \in \training{\subsbd_{\xi}}\,.
  \nonumber
\end{align}
Based on the set of snapshots which we call $Z$, a reduced basis $\basis$
is constructed using a greedy algorithm (cf.\ Alg.\ \ref{algo:snapshot_greedy}).
In the numerical experiments, the $V$-norm and $V$-inner product are used.
The complete generation of the reduced face spaces $\rsubssd_\xi, \ \xi \in \Upsilon_1$ with basis $\rbasis_{\rsubssd_\xi}$
is summarized in Alg.\ \ref{algo:training}.
\begin{algorithm2e}
\DontPrintSemicolon
\SetAlgoVlined
\SetKwFunction{SnapshotGreedy}{SnapshotGreedy}
\SetKwInOut{Input}{Input}
\SetKwInOut{Output}{Output}
\Fn{\SnapshotGreedy{$Z,\varepsilon_\mathrm{train}$}}{
  \Input{set of elements to approximate $Z$,\\ training tolerance $\varepsilon_\mathrm{train}$}
  \Output{basis of approximation space $\basis$}
  $\basis \leftarrow \emptyset$ \;
  \While{$\max_{z \in Z} \norm{z} > \varepsilon_\mathrm{train}$}{
    $\hat z \leftarrow \argmax_{z \in Z} \norm{z}$\;
    $\hat z \leftarrow \frac{\hat z}{\norm{\hat z}}$\;
    $Z \leftarrow \left\{z - (z,\hat z)\hat z \ | \ z \in Z \right\}$\;
    $\basis \leftarrow \basis \cup \{\hat z\}$ \;
  }
  \Return $\basis$\;
}
\caption{SnapshotGreedy}
\label{algo:snapshot_greedy}
\end{algorithm2e}
\begin{algorithm2e}
\DontPrintSemicolon
\SetAlgoVlined
\SetKwFunction{Training}{Training}
\SetKwFunction{RandomSampling}{RandomSampling}
\SetKwFunction{SnapshotGreedy}{SnapshotGreedy}
\SetKwInOut{Input}{Input}
\SetKwInOut{Output}{Output}
\Fn{\Training{$\xi, M, \varepsilon_\mathrm{train}$}}{
  \Input{space identifier $\xi$,\\ number of random samples $M$,\\ training tolerance $\varepsilon_\mathrm{train}$}
  \Output{reduced local subspace $\rsubssd_\xi$}
  $G \leftarrow$ \RandomSampling{$\xi$, M}\;
  $Z \leftarrow \emptyset$\;
  \ForEach{$\mu \in \Xi$}{
    \Find{$u_f \in \training{\subsbd_\xi}$ \st}{
      $
      \qquad a_{\mu}(u_f,\phi) = f_\mu(\phi) \qquad \forall \phi \in \training{\subsbd_\xi}
      $}
    $Z \leftarrow Z \cup \subsmap_{\subssd_\xi}(u_f)$\;
    \ForEach{$g_c \in G$}{
      \Find{$u_c \in \training{\subsbd_{\xi}}$ \st}{
        $
        \qquad a_{\mu}(u_c + g_c,\phi) = 0 \qquad \forall \phi \in \training{\subsbd_{\xi}}
        $}
      $Z \leftarrow Z \cup \subsmap_{\subssd_\xi}(u_c)$\;
    }
  }
  $\rbasis_{\subssd_\xi} \leftarrow$ \SnapshotGreedy{$Z, \varepsilon_\mathrm{train}$}\;
  \Return $\spanset(\rbasis_{V_{\xi}})$\;
}
\caption{Training to construct reduced face spaces $\rsubssd_\xi, \ \xi \in \Upsilon_1$}
\label{algo:training}
\end{algorithm2e}
\subsection{Basis Construction for Reduced Cell Spaces
Using Local Greedy}
\label{sec:codim_0_greedy}
For each cell space $\subssd_\xi, \ \xi \in \Upsilon_0$ we create a reduced space
$\rsubssd _\xi$. These spaces should be able to
approximate the solution in the associated part of the space decomposition for any
variation of functions from reduced vertex or face spaces that are coupled with it.
We define the reduced coupling space $\rcoupling{\subssd_{\xi}}$
and its basis $\rbasis_{\rcoupling{\subssd_\xi}}$.
\begin{equation}
\Upsilon^C_\xi := \Big\{\zeta \in \Upsilon \ | \ \zeta \cap \xi \ne \emptyset,
\zeta \nsubseteq \xi \Big\}
\end{equation}
\begin{equation}
\rcoupling{\subssd_{\xi}} := \bigoplus_{\zeta \in \Upsilon^C_\xi} \rsubssd_\zeta
\qquad \qquad
\rbasis_{\rcoupling{\subssd_\xi}} := \bigcup_{\zeta \in \Upsilon^C_\xi}
\rbasis_{\rsubssd_\zeta}
\end{equation}
We introduce an extended training set:
$\Xi \times \{1, \dots,
N_\rbasis+1\}$, $N_{\rbasis} := \dim(\rcoupling{\subssd_\xi})$.
Given a pair $(\mu, j) \in \Xi \times \{1, \dots, N_\rbasis+1\}$, we define the associated right hand
side as
\begin{equation}
  g_{\mu,j}(\phi) :=
  \begin{cases}
    - a_\mu(\psi_j, \phi) & \qquad \text{if } j \le N_\rbasis\\
    \dualpair{f_\mu}{\phi} & \qquad \text{if } j = N_\rbasis+1\,,
  \end{cases}
\end{equation}
where $\psi_j$ denotes the $j$-th basis function of $\rbasis_{\rcoupling{\subssd_\xi}}$.
We then construct the reduced cell space $\widetilde \subssd_\xi$
as the classical reduced basis space
(cf.\ Alg.\ \ref{algo:local_greedy}, LocalGreedy)
with respect to the following parameterized local problem:
Given a pair $(\mu, j) \in \Xi \times \{1, \dots, N_\rbasis+1\}$, find $u_{\mu,j} \in \subssd_\xi$ such that
\begin{equation}
a_\mu(u_{\mu,j},\phi) = g_{\mu,j}(\phi) \qquad \forall \phi \in \subssd_\xi.
\end{equation}
The corresponding reduced solutions are hence defined as:
Find $\wt u_{\mu,j} \in \rsubssd_\xi$ such that:
\begin{equation}
a_\mu(\wt u_{\mu,j},\phi) = g_{\mu,j}(\phi) \qquad \forall \phi \in \rsubssd_\xi
\end{equation}
Both problems have unique solutions due to the coercivity and continuity of $a_\mu$.
For the LocalGreedy (Algorithm \ref{algo:local_greedy})  we use the standard Reduced Basis residual error estimator, i.e.
\begin{equation}
\norm{u_{\mu,j} - \wt u_{\mu,j}}_{\subssd_\xi} \leq \Delta_{cell}(\wt u_{\mu,j}) := \frac{1}{\alpha_{LB}(\mu)} \norm{ R_{\mu,j}(\wt u_{\mu,j}) }_{{\subssd'_\xi}}\,,
\end{equation}
with the local residual
\begin{eqnarray}
R_{\mu, j}: V_\xi &\rightarrow& V'_\xi\\
\varphi &\mapsto& g_{\mu, j}(\cdot) - a_\mu(\varphi, \cdot)
\nonumber
\end{eqnarray}
and a lower bound for the coercivity constant $\alpha_{LB}$.
The idea of using a local greedy to generate a local space for all
possible boundary values can also be found in \cite{phdiapichino,FrancescaAntonietti2015}.
\begin{algorithm2e}[t]
\DontPrintSemicolon
\SetAlgoVlined
\SetKwFunction{LocalGreedy}{LocalGreedy}
\SetKwInOut{Input}{Input}
\SetKwInOut{Output}{Output}
\Fn{\LocalGreedy{$\xi, \varepsilon_\mathrm{greedy}$}}{
  \Input{space identifier $\xi$,\\ greedy tolerance $\varepsilon_\mathrm{greedy}$}
  \Output{reduced local subspace $\rsubssd_\xi$}
  $\rbasis_{\rsubssd_\xi} \leftarrow \emptyset$ \;
  \While{
    $\max\limits_{\substack{
        \mathllap{\mu} \in \mathrlap{\Xi} \\
        \mathllap{j} \in \mathrlap{\{1, \dots, N_\rbasis +1 \}}}
    }\ \Delta_{cell}(\wt{u}_{\mu,j}) > \varepsilon_\mathrm{greedy}$
  }%
  {
    $\hat \mu,\jhat \leftarrow \argmax\limits_{\substack{
        \mathllap{\mu} \in \mathrlap{\Xi} \\
        \mathllap{j} \in \mathrlap{\{1, \dots, N_\rbasis +1 \}}}}%
    \ \Delta_{cell}(\wt{u}_{\mu,j})$\;
    \Find{$u_{\hat \mu, \jhat} \in \subssd_\xi$ \st}{
      $
      \qquad a_{\hat \mu}(u_{\hat \mu, \jhat},\phi) = g_{\hat \mu, \jhat}(\phi) \qquad \forall \phi \in \subssd_\xi
      $}
    $u_{\hat \mu, \jhat} \leftarrow u_{\hat \mu, \jhat} - \sum\limits_{\mathclap{\phi\in\rbasis_{\rsubssd_\xi}}} {( \phi, u_{\hat \mu, \jhat})_V} \, \phi$\;
    $\rbasis_{\rsubssd_\xi} \leftarrow \rbasis_{\rsubssd_\xi} \cup \left\{\norm{u_{\hat \mu, \jhat}}_V^{-1}u_{\hat \mu, \jhat}\right\}$\;
  }
  \Return $\spanset(\rbasis_{\rsubssd_\xi})$\;
}
\caption{LocalGreedy to construct local cell spaces $\rsubssd_\xi, \ \xi \in \Upsilon_0$}
\label{algo:local_greedy}
\end{algorithm2e}


\section{A-Posteriori Error Estimator}
\label{sec:a_posteriori}
The model reduction error in the ArbiLoMod has to be controlled.
To this end, an a posteriori error estimator is used
which should have the following properties: 
\begin{enumerate}[topsep=0pt,itemsep=-1ex,partopsep=1ex,parsep=1ex]
\item It is robust and efficient.
\item It is online-offline decomposable.
\item It is parallelizable with little amount of communication.
\item After a localized geometry change, the offline computed
data in unaffected regions
can be reused.
\item
It can be used to steer adaptive enrichment of the reduced local subspaces.
\end{enumerate}
All these requirements are fulfilled by the estimator presented in
the following. We develop localized bounds for the standard RB error estimator,
\begin{equation}
\Delta(\widetilde u_\mu) := \frac{1}{\alpha_\mu} \norm{R_\mu(\widetilde u_\mu)}_{V_h'}
\end{equation}
where $R_\mu(\widetilde u_\mu) \in V_h^\prime$ is the global residual given as
$
	\dualpair{R_\mu(\widetilde u_\mu)}{\varphi} = \dualpair{f_\mu}{\varphi } - a_\mu(\widetilde{u}_\mu, \varphi)
$.
This error estimator is known to be robust and efficient (\cite[Proposition 4.4]{HesthavenRozzaEtAl2016}):
\begin{equation}\label{eq:global_estimator}
\norm{u_\mu - \widetilde u_\mu}_V \leq \Delta(\widetilde u_\mu) 
\leq \frac{\gamma_\mu}{\alpha_\mu} \norm{u_\mu - \widetilde u_\mu}_V.
\end{equation}
\subsection{Abstract Estimates}
We start by showing two abstract localized estimates for the dual norm
of a linear functional.
\begin{proposition}
\label{thm:a_posteriori}
Let $\{\subsod_\xi\}_{\xi \in \Upsilon_E}$ be
a collection of linear subspaces of $V_h$ for some finite index set $\Upsilon_E$ and let $\tilde V \subset V_h$ denote an arbitrary 
subspace. 
Moreover let $P_{\subsod_\xi}: V_h \longrightarrow \subsod_\xi \subseteq V_h, \xi \in \Upsilon_E$
be continuous linear mappings which satisfy $\sum_{\xi \in \Upsilon_E} P_{\subsod_\xi} = \operatorname{id}_{V_h}$.
With the stability constant of this partition modulo $\tilde V$ defined as
\begin{equation}
\puconstantV := \sup_{\varphi \in V_h \setminus \{0\}} \frac{( \sum_{\xi \in \Upsilon_E} \inf_{\tilde \varphi \in \tilde V \cap \subsod_\xi}  \norm{P_{\subsod_\xi}(\varphi) - \tilde \varphi}_V^2)^\frac{1}{2}}{\norm{\varphi}_V},
\nonumber
\end{equation}
we have for any linear functional $f \in V_h^\prime$ with $ \dualpair{f}{\tilde \varphi} = 0 \  \forall {\tilde \varphi} \in \tilde V$  the estimate
\begin{equation}
	\label{eq:abstract_localized_estimate}
	\norm{f}_{V_h^\prime} \leq \puconstantV \cdot \Big( \sum_{\xi \in \Upsilon_E} \norm{f}^2_{\subsod_\xi^\prime} \Big)^{\frac{1}{2}},
	\nonumber
\end{equation}
where $\norm{f}_{\subsod_\xi^\prime}$ denotes the norm of the restriction of $f$ to $\subsod_\xi$.
\end{proposition}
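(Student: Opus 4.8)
The plan is to combine the partition-of-unity property $\sum_{\xi \in \Upsilon_E} P_{\subsod_\xi} = \operatorname{id}_{V_h}$ with the annihilation hypothesis $\dualpair{f}{\tilde\varphi} = 0$ for $\tilde\varphi \in \tilde V$, and then to contract via the discrete Cauchy--Schwarz inequality. Concretely, I would fix an arbitrary $\varphi \in V_h$ with $\norm{\varphi}_V = 1$ and expand, using the partition of unity,
\[
\dualpair{f}{\varphi} = \sum_{\xi \in \Upsilon_E} \dualpair{f}{P_{\subsod_\xi}(\varphi)}.
\]
For each $\xi$ and each $\tilde\varphi_\xi \in \tilde V \cap \subsod_\xi$, the functional $f$ kills $\tilde\varphi_\xi$, so $\dualpair{f}{P_{\subsod_\xi}(\varphi)} = \dualpair{f}{P_{\subsod_\xi}(\varphi) - \tilde\varphi_\xi}$; since $P_{\subsod_\xi}(\varphi) \in \subsod_\xi$ and $\tilde V \cap \subsod_\xi \subseteq \subsod_\xi$, the difference $P_{\subsod_\xi}(\varphi) - \tilde\varphi_\xi$ lies in $\subsod_\xi$, hence $|\dualpair{f}{P_{\subsod_\xi}(\varphi) - \tilde\varphi_\xi}| \leq \norm{f}_{\subsod_\xi'}\,\norm{P_{\subsod_\xi}(\varphi) - \tilde\varphi_\xi}_V$.

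Next I would take, for each $\xi$, the $\tilde\varphi_\xi$ attaining the infimum $d_\xi(\varphi) := \inf_{\tilde\varphi \in \tilde V \cap \subsod_\xi}\norm{P_{\subsod_\xi}(\varphi) - \tilde\varphi}_V$, which exists because $\Upsilon_E$ is finite and each $\tilde V \cap \subsod_\xi$ is finite-dimensional (or, if one prefers, work with $\epsilon$-minimizers and let $\epsilon \to 0$). Summing the per-$\xi$ bounds over $\Upsilon_E$ and applying the triangle inequality followed by Cauchy--Schwarz in $\mathbb{R}^{|\Upsilon_E|}$ gives
\[
|\dualpair{f}{\varphi}| \leq \sum_{\xi \in \Upsilon_E} \norm{f}_{\subsod_\xi'}\, d_\xi(\varphi) \leq \Big(\sum_{\xi \in \Upsilon_E} \norm{f}_{\subsod_\xi'}^2\Big)^{1/2} \Big(\sum_{\xi \in \Upsilon_E} d_\xi(\varphi)^2\Big)^{1/2}.
\]
By the very definition of $\puconstantV$, the last factor is at most $\puconstantV\,\norm{\varphi}_V = \puconstantV$. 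Taking the supremum over all $\varphi \in V_h$ with $\norm{\varphi}_V = 1$ then yields $\norm{f}_{V_h'} \leq \puconstantV\,(\sum_{\xi} \norm{f}_{\subsod_\xi'}^2)^{1/2}$, which is the claim.

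Since the argument is just a weighted duality estimate, I do not expect a genuine obstacle. The only points that need a little care are that the differences $P_{\subsod_\xi}(\varphi) - \tilde\varphi_\xi$ really lie in $\subsod_\xi$, so that the restricted dual norm $\norm{f}_{\subsod_\xi'}$ is the correct quantity to apply, and the (routine) justification that the per-$\xi$ minimizers can be chosen simultaneously. Note that continuity of the $P_{\subsod_\xi}$ is used only to ensure $\puconstantV < \infty$; it is not needed in the chain of inequalities itself, so I would keep the bound formally valid even in the degenerate case $\puconstantV = \infty$.
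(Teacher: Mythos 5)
Your proposal is correct and follows essentially the same route as the paper's proof: expand $\dualpair{f}{\varphi}$ via the partition $\sum_\xi P_{\subsod_\xi} = \operatorname{id}$, use the annihilation of $\tilde V$ to insert the infimum over $\tilde V \cap \subsod_\xi$, bound each term by $\norm{f}_{\subsod_\xi'}$ times the local distance, and contract with Cauchy--Schwarz. Your extra care about attaining the infima (or using $\epsilon$-minimizers) is a minor refinement the paper glosses over; otherwise the arguments coincide.
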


\begin{proof}
Using the Cauchy-Schwarz inequality and $\dualpair{f}{\tilde \varphi} = 0 \  \forall {\tilde \varphi} \in \tilde V$, we have
\begin{align*}
\norm{f}_{V_h'}
&= \sup_{\varphi \in V_h \setminus \{0\}} \frac{\sum_{\xi \in \Upsilon_E}  \dualpair{f}{P_{\subsod_\xi}(\varphi)}}{\norm{\varphi}_V}
= \sup_{\varphi \in V_h \setminus \{0\}} \frac{\sum_{\xi \in \Upsilon_E}  \inf_{\tilde \varphi \in \tilde V \cap \subsod_\xi}  \dualpair{f}{P_{\subsod_\xi}(\varphi)- \tilde \varphi}}{\norm{\varphi}_V }\\
&\leq \sup_{\varphi \in V_h \setminus \{0\}} \frac{\sum_{\xi \in \Upsilon_E} \norm{f}_{\subsod_\xi^\prime} \inf_{\tilde \varphi \in \tilde V \cap \subsod_\xi} \norm{P_{\subsod_\xi}(\varphi)- \tilde \varphi}_V}{\norm{\varphi}_V}\\
&\leq \sup_{\varphi \in V_h \setminus \{0\}} \frac{(\sum_{\xi \in \Upsilon_E} \norm{f}_{\subsod_\xi^\prime}^2)^\frac{1}{2}( \sum_{\xi \in \Upsilon_E} \inf_{\tilde \varphi \in \tilde V \cap \subsod_\xi}  \norm{P_{\subsod_\xi}(\varphi)- \tilde \varphi}_V^2)^\frac{1}{2}}{\norm{\varphi}_V}\\
&= \puconstantV \cdot  \Big( \sum_{\xi \in \Upsilon_E} \norm{f}^2_{\subsod_\xi^\prime} \Big)^{\frac{1}{2}}.
\end{align*}
\end{proof}

A stability constant very similar to $\puconstantV$ appears in the
analysis of overlapping domain decomposition methods
(e.g. \cite[Assumption 2.2]{toselli2005domain}, \cite{Spillane2013})
and in localization of error estimators on stars (e.g. \cite{CDN2012}).
\begin{proposition}
\label{thm:efficiency}
With the assumptions in Proposition \ref{thm:a_posteriori},
let $\dot{\bigcup}_{j=1}^J \Upsilon_{E,j} = \Upsilon_E$ be a
partition of $\Upsilon_E$ such that
\begin{equation*}
	\forall 1\leq j \leq J\ \forall \xi_1 \neq \xi_2 \in \Upsilon_{E,j}:\ \subsod_{\xi_1} \perp \subsod_{\xi_2}.
\end{equation*}
Then we have
\begin{equation}
	\label{eq:abstract_efficiency_estimate}
	\Big( \sum_{\xi \in \Upsilon_E} \norm{f}^2_{\subsod_\xi^\prime} \Big)^{\frac{1}{2}}
	\leq \sqrt{J}  \norm{f}_{V_h^\prime},
	\nonumber
\end{equation}
\end{proposition}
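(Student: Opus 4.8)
The plan is to reduce the global sum over $\Upsilon_E$ to the sum over each orthogonality class $\Upsilon_{E,j}$, prove the bound with constant $1$ inside each class, and then sum the $J$ class-wise bounds. Note that neither $\tilde V$ nor the maps $P_{\subsod_\xi}$ enter this estimate; only the subspaces $\subsod_\xi$ and their orthogonality structure matter.

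First I would fix $j \in \{1,\dots,J\}$ and, for each $\xi \in \Upsilon_{E,j}$, introduce the Riesz representative $r_\xi \in \subsod_\xi$ of the restriction $f|_{\subsod_\xi}$ with respect to the $V$-inner product, so that $\dualpair{f}{\varphi} = (r_\xi,\varphi)_V$ for all $\varphi \in \subsod_\xi$ and $\norm{f}_{\subsod_\xi^\prime} = \norm{r_\xi}_V$; these exist and are unique because $V_h$ is finite dimensional. The key step is then to test $f$ against the combined vector $s_j := \sum_{\xi \in \Upsilon_{E,j}} r_\xi$. Since the spaces $\{\subsod_\xi\}_{\xi \in \Upsilon_{E,j}}$ are pairwise orthogonal by hypothesis, the vectors $r_\xi$ are pairwise orthogonal, so Pythagoras gives $\norm{s_j}_V^2 = \sum_{\xi \in \Upsilon_{E,j}} \norm{r_\xi}_V^2$. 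On the other hand, using $r_\xi \in \subsod_\xi$ and the defining property of the representatives, $\dualpair{f}{s_j} = \sum_{\xi \in \Upsilon_{E,j}} \dualpair{f}{r_\xi} = \sum_{\xi \in \Upsilon_{E,j}} (r_\xi,r_\xi)_V = \sum_{\xi \in \Upsilon_{E,j}} \norm{r_\xi}_V^2$. Combining these with $\dualpair{f}{s_j} \le \norm{f}_{V_h^\prime}\,\norm{s_j}_V$ and dividing by $\norm{s_j}_V$ (the case $s_j = 0$ being trivial) yields $\bigl(\sum_{\xi \in \Upsilon_{E,j}} \norm{f}_{\subsod_\xi^\prime}^2\bigr)^{1/2} \le \norm{f}_{V_h^\prime}$.

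Finally I would sum these $J$ estimates. Because the classes $\Upsilon_{E,j}$ partition $\Upsilon_E$, the left-hand sides reassemble to $\sum_{\xi \in \Upsilon_E} \norm{f}_{\subsod_\xi^\prime}^2 \le J\,\norm{f}_{V_h^\prime}^2$, and taking square roots gives the claimed inequality. I do not expect a genuine obstacle: the only delicate points are that the Pythagorean identity for $\norm{s_j}_V$ and the "test against $\sum_\xi r_\xi$" trick both rely precisely on the within-class orthogonality assumed in the proposition, and that the passage from the class-wise bounds to the global one contributes exactly the factor $\sqrt{J}$; everything else is bookkeeping.
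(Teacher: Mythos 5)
Your proof is correct and follows essentially the same route as the paper: both arguments hinge on the Riesz representatives of the restrictions $f|_{\subsod_\xi}$, use within-class orthogonality and Pythagoras to get the class-wise bound $\sum_{\xi \in \Upsilon_{E,j}} \norm{f}_{\subsod_\xi^\prime}^2 \leq \norm{f}_{V_h^\prime}^2$, and then sum over the $J$ classes. The only cosmetic difference is that the paper explicitly identifies $\sum_\xi r_\xi$ as the Riesz representative of $f$ on the orthogonal direct sum $\bigoplus_{\xi \in \Upsilon_{E,j}} \subsod_\xi$, whereas you test $f$ against that vector directly.
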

\begin{proof}
Let $\subsod_{\xi_1} \perp \subsod_{\xi_2}$ be some subspaces of $V_h$, and 
let $f$ be a continuous linear functional on 
$\subsod_{\xi_1} \oplus \subsod_{\xi_2}$.
If $v_{f,1} \in \subsod_{\xi_1}$ and $v_{f,2} \in \subsod_{\xi_2}$ are the 
Riesz representatives of the restrictions of $f$ to 
$\subsod_{\xi_1}$ and $\subsod_{\xi_2}$,
then due to the orthogonality of $\subsod_{\xi_1}$ and $\subsod_{\xi_2}$,
$v_{f,1} + v_{f,2}$ is the Riesz representative
of $f$ on $\subsod_{\xi_1} \oplus \subsod_{\xi_2}$. Thus,
\begin{align*}
\norm{f}_{(\subsod_{\xi_1} \oplus \subsod_{\xi_2})^\prime}^2 &= \norm{v_{f,1} + v_{f,2}}_{V_h}^2 \\
                                  &= \norm{v_{f,1}}_{V_h}^2 + \norm{v_{f,2}}_{V_h}^2 = \norm{f}_{\subsod_{\xi_1}^\prime}^2 + \norm{f}_{\subsod_{\xi_2}^\prime}^2,
\end{align*}
where we have used the orthogonality of the spaces again. The same is 
true for a larger orthogonal sum of spaces.
We therefore obtain:
\begin{align*}
\sum_{\xi \in \Upsilon_E} \norm{f}_{\subsod_\xi^\prime}^2 &= \sum_{j=1}^{J} \sum_{\xi \in \Upsilon_{E,j}} \norm{f}_{\subsod_\xi^\prime}^2 \\
    &= \sum_{j=1}^{J} \norm{f}_{(\bigoplus_{\xi \in \Upsilon_{E,j}}\subsod_\xi)^\prime}^2 \\
    &\leq J \norm{f}_{V_h^\prime}^2.
\end{align*}
\end{proof}

When grouping the spaces $\subsod_\xi$ so that in each group, all
spaces are orthogonal to each other, $J$ is the number of groups needed.
Applying both estimates to the residual, we obtain an efficient, localized error estimator:
\begin{corollary}\label{thm:abstract_error_estimate}
The error estimator $\Delta_{loc}(\widetilde{u}_\mu)$ defined as
\begin{equation}
\Delta_{loc}(\widetilde{u}_\mu) := \frac{1}{\alpha_\mu}  \puconstantV  \big(\sum_{\xi \in \Upsilon_E} \norm{R_\mu(\widetilde{u}_\mu)}^2_{\subsod_\xi^\prime} \big)^\frac{1}{2}
\end{equation}
is robust and efficient:
\begin{equation}
	\label{eq:efficiency_estimate}
	\norm{ u_\mu - \widetilde{u}_\mu }_V \leq \Delta_{loc}(\widetilde{u}_\mu)
	\leq \frac{\gamma_\mu  \sqrt{J}  \puconstantV}{\alpha_\mu}   \norm{ u_\mu - \widetilde{u}_\mu }_V
	\nonumber
\end{equation}
\end{corollary}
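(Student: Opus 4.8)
The plan is to combine the two abstract estimates of Propositions \ref{thm:a_posteriori} and \ref{thm:efficiency} by applying them to the particular linear functional $f = R_\mu(\widetilde u_\mu) \in V_h'$, the global residual of the reduced solution, and then to chain the resulting inequalities with the classical RB estimate \eqref{eq:global_estimator}. The only nontrivial thing to check before invoking the abstract results is that $R_\mu(\widetilde u_\mu)$ satisfies the orthogonality hypothesis $\dualpair{R_\mu(\widetilde u_\mu)}{\tilde\varphi} = 0$ for all $\tilde\varphi \in \tilde V$; this is precisely Galerkin orthogonality for the reduced problem \eqref{eq:red_problem}, since $\dualpair{R_\mu(\widetilde u_\mu)}{\tilde\varphi} = \dualpair{f_\mu}{\tilde\varphi} - a_\mu(\widetilde u_\mu, \tilde\varphi) = 0$ by definition of $\widetilde u_\mu$.

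First I would establish the lower bound (reliability). By coercivity, $\norm{u_\mu - \widetilde u_\mu}_V \le \frac{1}{\alpha_\mu}\norm{R_\mu(\widetilde u_\mu)}_{V_h'}$, which is the left half of \eqref{eq:global_estimator}. Applying Proposition \ref{thm:a_posteriori} to $f = R_\mu(\widetilde u_\mu)$ gives $\norm{R_\mu(\widetilde u_\mu)}_{V_h'} \le \puconstantV \big(\sum_{\xi \in \Upsilon_E} \norm{R_\mu(\widetilde u_\mu)}_{\subsod_\xi'}^2\big)^{1/2}$, and combining the two yields $\norm{u_\mu - \widetilde u_\mu}_V \le \Delta_{loc}(\widetilde u_\mu)$.

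Next I would establish the upper bound (efficiency). Starting from the definition of $\Delta_{loc}$, Proposition \ref{thm:efficiency} gives $\big(\sum_{\xi \in \Upsilon_E} \norm{R_\mu(\widetilde u_\mu)}_{\subsod_\xi'}^2\big)^{1/2} \le \sqrt{J}\,\norm{R_\mu(\widetilde u_\mu)}_{V_h'}$, so that $\Delta_{loc}(\widetilde u_\mu) \le \frac{\sqrt{J}\,\puconstantV}{\alpha_\mu}\norm{R_\mu(\widetilde u_\mu)}_{V_h'}$. Then the right half of \eqref{eq:global_estimator}, namely $\norm{R_\mu(\widetilde u_\mu)}_{V_h'} = \alpha_\mu \Delta(\widetilde u_\mu) \le \gamma_\mu \norm{u_\mu - \widetilde u_\mu}_V$, finishes the chain: $\Delta_{loc}(\widetilde u_\mu) \le \frac{\gamma_\mu \sqrt{J}\,\puconstantV}{\alpha_\mu}\norm{u_\mu - \widetilde u_\mu}_V$.

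There is essentially no real obstacle here: the corollary is a bookkeeping combination of results already proved. The only point requiring a word of care is making sure the hypotheses of the two propositions are genuinely met by this choice of $f$ and by the chosen family $\{\subsod_\xi\}$ with its partition into $J$ mutually orthogonal groups — in particular that $\sum_{\xi} P_{\subsod_\xi} = \operatorname{id}_{V_h}$ and that the grouping with the stated orthogonality exists for the concrete spaces used later in the section. Once those are in place, the estimate is immediate, so I would keep the proof to the two short chains of inequalities above.
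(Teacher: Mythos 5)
Your proposal is correct and follows essentially the same route as the paper, which likewise proves the corollary by applying Propositions \ref{thm:a_posteriori} and \ref{thm:efficiency} to the residual and chaining with \eqref{eq:global_estimator}; your explicit verification of the Galerkin-orthogonality hypothesis $\dualpair{R_\mu(\widetilde u_\mu)}{\tilde\varphi}=0$ for $\tilde\varphi\in\tilde V$ is a detail the paper leaves implicit but is exactly the right point to check.
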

\begin{proof}
Applying Propositions \ref{thm:a_posteriori} and \ref{thm:efficiency} to
the error estimator
\\
$\Delta(\widetilde u_\mu) = \frac{1}{\alpha_\mu} \norm{R_\mu(\widetilde{u}_\mu)}_{V_h'}$ yields,
together with (\ref{eq:global_estimator}), the proposition.
\end{proof}

Online-offline decomposition of this error estimator can be done by 
applying the usual strategy for online-offline decomposition
used with the standard RB error estimator (see e.g. \cite[Sec.  4.2.5]{HesthavenRozzaEtAl2016}
or the numerically more stable approach \cite{BEOR14a})
to every dual norm in $\Delta_{loc}(\widetilde u_\mu)$.
\subsection{Choosing Spaces}
The error estimator defined in Corollary \ref{thm:abstract_error_estimate}
works for any spaces $\subsod_\xi$ and mappings $P_{\subsod_\xi}$
fulfilling the assumptions in Proposition \ref{thm:a_posteriori}.
However, in order to obtain good constants $\puconstantV$ and $\sqrt J$,
both have to be chosen carefully.
In addition, two more properties are needed
for good performance of the implementation.
First, the subspaces should be spanned by FE ansatz functions, allowing the
residual to be easily evaluated on these spaces. Second, the inner product
matrix on the subspaces should be sparse, as the inner product matrix
has to be solved in the computation of the dual norms.
We use an overlapping decomposition based on
the non-overlapping domain decomposition introduced in
\eqref{eq:dd}.
\begin{definition}[Overlapping space decomposition]
Let the index set $\Upsilon_E$ for the overlapping space decomposition be
given by the vertices of the domain decomposition, i.e.
\begin{equation}
\Upsilon_E = \Upsilon_2.
\end{equation}
We then define the overlapping spaces $\subsod_\xi$ supported on the
overlapping domains $\Omega_\xi$ by:
\begin{equation}
\subsod_\xi :=
\bigoplus \Big\{\subsbd_\zeta \ \Big| \ \zeta \subseteq \xi \Big\},
\qquad
\Omega_\xi := \overset{\circ}{\overline{\bigcup_{i \in \xi} \Omega_i}}
\qquad \xi \in \Upsilon_E.
\label{eq:overlapping_space_def}
\end{equation}
\end{definition}
Note that we have $\subsod_\xi = \{\psi \in \mathcal{B}\ |\ \overset{\circ}{\operatorname{supp}}(\psi) \subseteq \Omega_\xi\} \subseteq H^1_0(\Omega_\xi)$.
Contrary to $V_\xi$ or $U_\xi$, these spaces do not form a direct sum decomposition of $V_h$.
We next state a first estimate on the partition of unity constant of Corollary (\ref{thm:abstract_error_estimate}) for this choice 
of partition, which does not take into account that the residual vanishes on the reduced space. The resulting estimate thus 
depends on $H^{-1}$, where $H$ is the minimum diameter of the subdomains of the macro partition.
Typically the size of the macro partition is moderate such that $H^{-1}$ is small.
However, in the following Proposition \ref{thm:pu_bound} we will show that the constant $\puconstantV$ can be actually bounded 
independent of $H$, when we choose a partition of unity that is contained in the reduced space $\tilde V$.
\begin{proposition}
\label{thm:pu_bound_0}
Let $\ovlpconstant := \max_{x \in \Omega} \#\{\xi \in \Upsilon_E \ | \ x \in \Omega_\xi\}$ be the maximum 
number of estimator domains $\Omega_\xi$ overlapping in any point $x$ of $\Omega$
and let $H_\xi:= \diam (\Omega_\xi)$, $\xi \in \Upsilon_E$ and $H := \min_{\xi \in \Upsilon_E} H_\xi$.
Furthermore, assume that there exist partition of unity functions $p_\xi \in H^{1,\infty}(\Omega)$, $\xi \in \Upsilon_E$
and a linear interpolation operator $\mathcal{I}: V \longrightarrow V_h$ such that
\begin{enumerate}[topsep=0pt,itemsep=-1ex,partopsep=1ex,parsep=1ex,label=(\roman*)]
	\item $\sum_{\xi \in \Upsilon_E} p_\xi(x) = 1$ for all $x \in \Omega$,
	\item $\max_{\xi \in \Upsilon_E} \norm{p_\xi}_\infty \leq 1$ and $\norm{\nabla p_\xi}_\infty \leq
		\pufuncconstant  H_\xi^{-1}$ for all $\xi \in \Upsilon_E$,
	\item $\mathcal{I}(\varphi) = \varphi$ for all $\varphi \in V_h$,
	\item $\mathcal{I}(p_\xi V_h) \subseteq \subsod_\xi$ for all $\xi \in \Upsilon_E$,
	\item $\|\mathcal{I}(p_\xi v_h) - p_\xi v_h\|_V \leq c_I \|v_h\|_{\Omega_\xi,1}$ for all $\xi \in \Upsilon_E, v_h \in V_h$. 
\end{enumerate}
Then we have:
\begin{equation*}
	\puconstantV \leq \sqrt{4 + 2c_I^2 + 4\left(\pufuncconstant  H^{-1}\right)^2}\cdot \sqrt{\ovlpconstant}.
\end{equation*}
\end{proposition}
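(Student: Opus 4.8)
The plan is to bound $\puconstantV$ directly from its definition by exhibiting, for each $\varphi \in V_h$, a good choice of $\tilde\varphi \in \tilde V \cap \subsod_\xi$ in each term of the sum. Since the partition of unity functions $p_\xi$ are assumed to be available, the natural candidate is $\tilde\varphi := \mathcal{I}(p_\xi\,\varphi)$: by (iii) this lies in $V_h$, by (iv) it lies in $\subsod_\xi$. (Here one must be slightly careful: the proposition as stated should be read with $\tilde V \supseteq V_h$ in the relevant sense, or more precisely the partition of unity $\{\mathcal I(p_\xi v_h)\}$ is assumed contained in $\tilde V$ — this is exactly the hypothesis that will be made precise in the surrounding text; I would simply take $\tilde\varphi = \mathcal I(p_\xi \varphi) \in \tilde V \cap \subsod_\xi$.) Then $\inf_{\tilde\varphi} \norm{P_{\subsod_\xi}(\varphi) - \tilde\varphi}_V^2 \leq \norm{P_{\subsod_\xi}(\varphi) - \mathcal I(p_\xi\varphi)}_V^2$, and the whole estimate reduces to controlling $\sum_{\xi} \norm{P_{\subsod_\xi}(\varphi) - \mathcal I(p_\xi\varphi)}_V^2$ by a constant times $\norm{\varphi}_V^2$.

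Next I would split this via the triangle inequality as
\[
\norm{P_{\subsod_\xi}(\varphi) - \mathcal I(p_\xi\varphi)}_V
\leq \norm{P_{\subsod_\xi}(\varphi) - p_\xi\varphi}_V + \norm{p_\xi\varphi - \mathcal I(p_\xi\varphi)}_V,
\]
so that after squaring and using $(a+b)^2 \le 2a^2 + 2b^2$ it suffices to bound $\sum_\xi \norm{P_{\subsod_\xi}(\varphi) - p_\xi\varphi}_V^2$ and $\sum_\xi \norm{p_\xi\varphi - \mathcal I(p_\xi\varphi)}_V^2$ separately. For the second sum, hypothesis (v) gives $\norm{p_\xi\varphi - \mathcal I(p_\xi\varphi)}_V \le c_I \norm{\varphi}_{\Omega_\xi,1}$, and since each point of $\Omega$ lies in at most $\ovlpconstant$ of the domains $\Omega_\xi$, summing the squares of the local $H^1(\Omega_\xi)$-norms costs a factor $\ovlpconstant$: $\sum_\xi \norm{\varphi}_{\Omega_\xi,1}^2 \le \ovlpconstant\,\norm{\varphi}_V^2$. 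This yields the $2c_I^2\,\ovlpconstant$ contribution.

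For the first sum I would further split $P_{\subsod_\xi}(\varphi) - p_\xi\varphi$ and estimate the $L^2$-part and the gradient part of the $H^1$-norm. The key point is that $\{P_{\subsod_\xi}\}$ and $\{p_\xi\}$ are both ``partitions of identity'': $\sum_\xi P_{\subsod_\xi}(\varphi) = \varphi = \sum_\xi p_\xi\varphi$ (the latter pointwise, by (i)). On each overlapping domain the difference should be rewritten so that a discrete product rule / Leibniz-type bound is applied, producing a term of the form $\norm{\nabla p_\xi}_\infty\,\norm{\varphi}_{L^2(\Omega_\xi)} \le \pufuncconstant H_\xi^{-1}\norm{\varphi}_{L^2(\Omega_\xi)} \le \pufuncconstant H^{-1}\norm{\varphi}_{L^2(\Omega_\xi)}$ from hypothesis (ii), plus a $p_\xi\nabla\varphi$ term bounded by $\norm{p_\xi}_\infty\norm{\nabla\varphi}_{L^2(\Omega_\xi)} \le \norm{\nabla\varphi}_{L^2(\Omega_\xi)}$, again using (ii). Combined with the finite-overlap bound (factor $\ovlpconstant$), the $L^2$ and gradient contributions assemble into the remaining $\big(4 + 4(\pufuncconstant H^{-1})^2\big)\ovlpconstant$ under the square root, and collecting all three pieces gives $\puconstantV \le \sqrt{4 + 2c_I^2 + 4(\pufuncconstant H^{-1})^2}\cdot\sqrt{\ovlpconstant}$.

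The main obstacle I anticipate is the bookkeeping in the first sum: one has to carefully identify $P_{\subsod_\xi}(\varphi)$ with the ``$p_\xi\varphi$ interpolated'' object in a way that the cross terms telescope against $\sum_\xi p_\xi = 1$, and track exactly which constants ($2$'s and $4$'s) come from the repeated $(a+b)^2 \le 2a^2 + 2b^2$ splittings so that the final numerical constants match. The finite-overlap argument and the applications of (ii) and (v) are routine once the decomposition is set up correctly; the delicate part is organizing the algebra so no factor of $\ovlpconstant$ is lost or double-counted.
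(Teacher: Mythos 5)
There is a genuine structural gap in your setup, even though the analytic ingredients you list (triangle inequality with $(a+b)^2\le 2a^2+2b^2$, the product rule combined with hypothesis (ii), hypothesis (v), and the finite-overlap argument giving the factor $\ovlpconstant$) are exactly the right ones and would reproduce the stated constants. The problem is where you place the interpolant $\mathcal{I}(p_\xi\varphi)$. The constant $\puconstantV$ from Proposition \ref{thm:a_posteriori} depends on a \emph{choice} of partition operators $P_{\subsod_\xi}$ with $\sum_\xi P_{\subsod_\xi}=\operatorname{id}_{V_h}$, so proving the bound requires you to construct these operators; the correct move is to \emph{define} $P_{\subsod_\xi}(\varphi):=\mathcal{I}(p_\xi\varphi)$, check via (i), (iii), (iv) that this is an admissible choice, and then bound the infimum over $\tilde\varphi\in\tilde V\cap\subsod_\xi$ simply by taking $\tilde\varphi=0$ (which is always admissible since both are linear subspaces). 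Instead, you keep $P_{\subsod_\xi}$ abstract and propose $\tilde\varphi=\mathcal{I}(p_\xi\varphi)$ as the competitor in the infimum. This fails twice over: first, $\mathcal{I}(p_\xi\varphi)$ lies in $\subsod_\xi$ by (iv) but there is no reason it lies in $\tilde V$, and the extra hypothesis you invent to repair this (that $\mathcal{I}(p_\xi V_h)\subseteq\tilde V$) is not part of the proposition and would defeat the purpose of the construction, since $\tilde V$ is the low-dimensional reduced space while $\mathcal{I}(p_\xi V_h)$ is essentially all of $\subsod_\xi$; second, the leftover term $\norm{P_{\subsod_\xi}(\varphi)-p_\xi\varphi}_V$ in your triangle-inequality split cannot be estimated for an unspecified $P_{\subsod_\xi}$, and if you resolve this by identifying $P_{\subsod_\xi}(\varphi)$ with $\mathcal{I}(p_\xi\varphi)$, your chosen $\tilde\varphi$ coincides with $P_{\subsod_\xi}(\varphi)$ and the whole quantity collapses to zero, which is clearly not the intended estimate.

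The repair is short and turns your plan into the paper's proof: set $P_{\subsod_\xi}(\varphi):=\mathcal{I}(p_\xi\varphi)$, bound $\inf_{\tilde\varphi\in\tilde V\cap\subsod_\xi}\norm{P_{\subsod_\xi}(\varphi)-\tilde\varphi}_V^2\le\norm{P_{\subsod_\xi}(\varphi)}_V^2\le 2\norm{\mathcal{I}(p_\xi\varphi)-p_\xi\varphi}_V^2+2\norm{p_\xi\varphi}_V^2$, apply (v) to the first term and the product rule with (ii) to the second, and sum using the overlap bound. The role of $\tilde V$ (choosing a nonzero $\tilde\varphi$, namely $\bar\varphi_\xi p_\xi$) only enters in the subsequent Proposition \ref{thm:pu_bound}, where the additional hypothesis $p_\xi\in\tilde V\cap\subsod_\xi$ is explicitly assumed and is what removes the $H^{-1}$ dependence; conflating the two propositions is the source of your confusion here.
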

\begin{proof}
We compute the bound for $\puconstant$ using the partition of unity and the interpolation operator.  To this end, let
\begin{equation*}
	P_{\subsod_\xi}(\varphi) := \mathcal{I}(p_\xi  \varphi), \qquad \xi \in \Upsilon_E.	
\end{equation*}
Due to (iv), these are linear mappings $V_h \longrightarrow \subsod_\xi$, and using (i) and (iii) we obtain
$\sum_{\xi \in \Upsilon_E} P_{\subsod_\xi}(\varphi) = \mathcal{I}(\sum_{\xi \in \Upsilon_E} p_\xi  \varphi)
= \mathcal{I}(\varphi) = \varphi$ for all $\varphi \in V_h$.
Thus, Corollary \ref{thm:abstract_error_estimate} applies with this specific choice of partition operators $P_{\subsod_\xi}$.
Now, using (ii) and (v) we have for any $\varphi \in V_h$
\begin{align*}
   \sum_{\xi \in \Upsilon_E} \norm{P_{\subsod_\xi}(\varphi)}_V^2 
   &\leq 2 \sum_{\xi \in \Upsilon_E} \|\mathcal{I}(p_\xi\varphi) - p_\xi\varphi\|_V^2 + \norm{p_\xi  \varphi}_V^2 \\
   &\leq 2 \sum_{\xi \in \Upsilon_E} c_I^2 \|\varphi\|_{\Omega_\xi,1}^2 + \int_{\Omega_\xi} 
      2|\nabla p_\xi  \varphi|^2(x) + 2|p_\xi  \nabla \varphi|^2(x) + |p_\xi  \varphi|^2(x) dx \\
   &\leq 2 \sum_{\xi \in \Upsilon_E} c_I^2 \|\varphi\|_{\Omega_\xi,1}^2 +
     (1 + 2\left(\pufuncconstant  H^{-1}\right)^2) |\varphi|_{\Omega_\xi, 0}^2 + 2|\varphi|_{\Omega_\xi, 1}^2 \\
     &\leq (4 + 2c_I^2 + 4\left(\pufuncconstant  H^{-1}\right)^2) \ovlpconstant  \norm{\varphi}_V^2
\end{align*}
This gives us the estimate.
\end{proof}

\begin{remark}
	When the domain decomposition $\Omega_i$ is sufficiently regular (e.g. see the numerical examples below), partition of
	unity functions satisfying (i) and (ii) can easily be found. 
	If $V_h \cup \{p_\xi\,|\, \xi \in \Upsilon_E\}$ consists of $p$-th order finite element basis functions for some fine triangulation of $\Omega$,
	Lagrange interpolation can be chosen as interpolation operator $\mathcal{I}$. In fact, using standard interpolation error estimates
	and inverse inequalities one sees that for each element $T$ of the fine triangulation with diameter $h$ one has:
	\begin{align*}
		\|\mathcal{I}(p_\xi v_h) - p_\xi v_h\|_{T,1} & \leq c h^p |p_\xi v_h|_{T, p+1} \\
						   & \leq c^\prime h^p \sum_{k=1}^p |v_h|_{T, k} \cdot |p_\xi|_{T, p+1-k, \infty} \\
					           & \leq c^{\prime\prime} h^p \sum_{k=1}^p h^{-(k-1)} |v_h|_{T,1} \cdot h^{-(p+1-k)} |p_\xi|_{T,0,\infty} \\
					           & \leq c^{\prime\prime} p |v_h|_{T,1},
	\end{align*}
	where $c^{\prime\prime}$ is a constant bounded by the shape regularity of the fine triangulation.
\end{remark}
For the rectangular domain decomposition used in 
the numerical example below, the constant $J$ is $J = 2^d = 4$:
it is possible to divide the overlapping domains into four classes, so that
within each class, no domain overlaps with any other
(cf.\ \cite[Sec. 5]{Chung2015}).

Furthermore, 
the 
coercivity constant $\alpha_\mu$ and the stability constant $\gamma_\mu$,
or estimates, are required.
For the numerical example
presented in Section \ref{sec:numerical_example}, those
can be calculated analytically.
In general this is not possible and the details of estimating them
numerically are
subject for further investigations.
The numerical computation of a lower bound for the coercivity constant
was subject of extensive research in the RB community
(see e.g. \cite{HUYNH2007473,CHEN20081295})%
, but these methods require the calculation of the
coercivity constant at some parameter values and thus
require the solution of a global, fine scale problem.
To the authors' knowledge, there are no publications
on localization of these methods so far.

The upper bound on the constant $\puconstantV$ in Proposition \ref{thm:pu_bound_0} depends on the domain size $H$ approximately
like $H^{-1}$. As the domain size is considered a constant in the
context of ArbiLoMod, the error estimator is already considered efficient with this bound. In the next proposition, we however 
show that the constant can indeed be bounded independent of $H$, if we exploit that the residual vanishes on the 
reduced space $\tilde V$. 
\begin{proposition}
\label{thm:pu_bound}
Let $p_\xi$, $\xi \in \Upsilon_E$ be a partition of unity and $\mathcal{I}$
an interpolation operator satisfying the prerequisites of Proposition~\ref{thm:pu_bound_0}.
Furthermore, assume $V = H^1_0(\Omega)$ and that $p_\xi \in \tilde{V} \cap \subsod_\xi$ for $\xi \in
\Upsilon_E^{\rm int} := \{\xi \in \Upsilon_E | \ \bar \Omega_\xi \cap \partial \Omega = \emptyset\}$,
e.g. $p_\xi$ is chosen as a basis function of $\tilde{V}_\xi$ (see Subsections \ref{sec:decomposition} and \ref{sec:codim_2_spaces}
above).
Then the following estimate holds:
\begin{equation*}
\puconstantV \leq \sqrt{4 + 2c_I^2 + 4(\pufuncconstant c_{\mathrm pc})^2} \cdot \sqrt{\ovlpconstant},
\end{equation*}
with a Poincar$\rm \acute{e}$-inequality constant $c_{\mathrm pc}$ (see proof below) that does not depend 
on the fine or coarse mesh sizes ($h, H$). 
\end{proposition}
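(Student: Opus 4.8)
The plan is to refine the estimate of Proposition~\ref{thm:pu_bound_0} by exploiting the freedom in the definition of $\puconstantV$ to subtract, for each $\xi$, an element of $\tilde V \cap \subsod_\xi$ from $P_{\subsod_\xi}(\varphi)$. As in the proof of Proposition~\ref{thm:pu_bound_0}, I keep the partition operators $P_{\subsod_\xi}(\varphi) := \mathcal{I}(p_\xi \varphi)$, so that $\sum_\xi P_{\subsod_\xi} = \operatorname{id}_{V_h}$ still holds and Corollary~\ref{thm:abstract_error_estimate} applies. The key new idea is that, for an interior index $\xi \in \Upsilon_E^{\rm int}$, I may choose the competitor $\tilde\varphi = \bar\varphi_\xi\, p_\xi$ where $\bar\varphi_\xi$ is the mean value of $\varphi$ over $\Omega_\xi$ (or a suitable average); since $p_\xi \in \tilde V \cap \subsod_\xi$ and constants are admissible multipliers in $V_h$, this lies in $\tilde V \cap \subsod_\xi$. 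Then $P_{\subsod_\xi}(\varphi) - \tilde\varphi = \mathcal{I}(p_\xi(\varphi - \bar\varphi_\xi))$, and the gradient term $\nabla p_\xi \cdot (\varphi - \bar\varphi_\xi)$ now carries a zero-mean factor, so a Poincar\'e inequality on $\Omega_\xi$ converts the dangerous $H_\xi^{-1}\|\varphi\|_{\Omega_\xi,0}$ contribution into $H_\xi^{-1} \cdot c_{\rm pc} H_\xi |\varphi|_{\Omega_\xi,1} = c_{\rm pc}|\varphi|_{\Omega_\xi,1}$, eliminating the $H^{-1}$ blow-up.

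Concretely, I would carry out the following steps. First, for interior $\xi$, decompose $\|P_{\subsod_\xi}(\varphi) - \tilde\varphi\|_V^2 \le 2\|\mathcal{I}(p_\xi w_\xi) - p_\xi w_\xi\|_V^2 + 2\|p_\xi w_\xi\|_V^2$ where $w_\xi := \varphi - \bar\varphi_\xi$, using property (v) for the interpolation-error term — note $\|w_\xi\|_{\Omega_\xi,1}$ has the same $H^1$-seminorm as $\varphi$ and, by Poincar\'e, an $L^2$-part controlled by $H_\xi|\varphi|_{\Omega_\xi,1}$. Second, expand $\|p_\xi w_\xi\|_V^2 = \int_{\Omega_\xi} |\nabla p_\xi \, w_\xi|^2 + |p_\xi \nabla w_\xi|^2 + |p_\xi w_\xi|^2$ (cross term absorbed), and bound $|\nabla p_\xi| \le \pufuncconstant H_\xi^{-1}$ from (ii), $|p_\xi|\le 1$, $\nabla w_\xi = \nabla\varphi$, and $\|w_\xi\|_{\Omega_\xi,0} \le c_{\rm pc} H_\xi |\varphi|_{\Omega_\xi,1}$ from Poincar\'e; this turns every term into a multiple of $|\varphi|_{\Omega_\xi,1}^2$ (plus the harmless $\int_{\Omega_\xi}|\varphi|^2$ from the zeroth-order part, which I would rather keep as $\|\varphi\|_{\Omega_\xi,1}^2$). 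Third, for boundary indices $\xi \in \Upsilon_E \setminus \Upsilon_E^{\rm int}$, I cannot use the mean-value trick, but here $\Omega_\xi$ touches $\partial\Omega$ and $\varphi \in H^1_0(\Omega)$, so a Friedrichs/Poincar\'e inequality with zero boundary data again gives $\|\varphi\|_{\Omega_\xi,0} \le c_{\rm pc} H_\xi |\varphi|_{\Omega_\xi,1}$ — so I simply take $\tilde\varphi = 0$ and run the same estimates. Fourth, sum over $\xi \in \Upsilon_E$, using the finite-overlap constant $\ovlpconstant$ to pass from $\sum_\xi \|\varphi\|_{\Omega_\xi,1}^2 \le \ovlpconstant \|\varphi\|_V^2$, and collect constants to arrive at $\puconstantV \le \sqrt{4 + 2c_I^2 + 4(\pufuncconstant c_{\rm pc})^2}\cdot\sqrt{\ovlpconstant}$.

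I would define $c_{\rm pc}$ as the largest constant appearing in the Poincar\'e/Friedrichs inequalities $\|w\|_{\Omega_\xi,0} \le c_{\rm pc} H_\xi |w|_{\Omega_\xi,1}$, taken over all $\xi \in \Upsilon_E$ — for interior $\xi$ over mean-zero $w$ on $\Omega_\xi$, for boundary $\xi$ over $w$ vanishing on $\partial\Omega \cap \bar\Omega_\xi$ — and note that by scaling this constant depends only on the shape of the (normalized) patches $\Omega_\xi/H_\xi$ and not on $h$ or $H$, which is exactly the claimed independence. A subtle point I should be careful about: $\bar\varphi_\xi p_\xi$ must genuinely lie in $V_h$ and in $\subsod_\xi$ — this is fine because $p_\xi$ is assumed to be a finite element function in $\tilde V_\xi \subseteq \subsod_\xi$ and multiplication by the scalar $\bar\varphi_\xi$ stays in the same finite-dimensional space — and that property (v) is still applicable with $v_h$ replaced by $w_\xi = \varphi - \bar\varphi_\xi \in V_h$. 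The main obstacle is the bookkeeping at the boundary patches: making sure the hypothesis $V = H^1_0(\Omega)$ really delivers a Poincar\'e constant on $\Omega_\xi$ with the right $H_\xi$-scaling even when $\Omega_\xi \cap \partial\Omega$ is only a corner or a short edge, and checking that $p_\xi = 0$ is not silently needed there (it is not, since $\tilde\varphi = 0$ is always admissible). Everything else is the same two-term split and finite-overlap summation already used in Proposition~\ref{thm:pu_bound_0}.
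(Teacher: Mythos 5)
Your proposal is correct and follows essentially the same route as the paper's proof: the competitor $\tilde\varphi=\bar\varphi_\xi p_\xi$ on interior patches combined with a mean-value Poincar\'e inequality to remove the $H_\xi^{-1}$ factor, the choice $\tilde\varphi=0$ with a Friedrichs-type inequality on boundary patches, and a final summation using the overlap constant $\ovlpconstant$. The only cosmetic difference is that you apply the interpolation-error bound (v) to $w_\xi=\varphi-\bar\varphi_\xi$ rather than to $\varphi$; since $\mathcal{I}(p_\xi)=p_\xi$ by (iii), the two interpolation-error terms coincide, so this changes nothing of substance.
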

\begin{proof}
For arbitrary $\varphi \in V_h$ let $\bar \varphi_\xi := \frac{1}{|\Omega_\xi|}\int_{\Omega_\xi} \varphi$.
We then have with $\Upsilon_E^{\rm ext}:=
\Upsilon_E \setminus \Upsilon_E^{\rm int}$
\begin{eqnarray*}
\puconstantV &=& \sup_{\varphi \in V_h \setminus \{0\}} \frac{( \sum_{\xi \in \Upsilon_E} \inf_{\tilde \varphi \in \tilde V \cap \subsod_\xi}  \norm{P_{\subsod_\xi}(\varphi) - \tilde \varphi}_V^2)^\frac{1}{2}}{\norm{\varphi}_V} \\
	&\leq& \sup_{\varphi \in V_h \setminus \{0\}}  \!\!\! \frac{( 
	      \sum_{\xi \in \Upsilon_E^{\rm int}}  \norm{P_{\subsod_\xi}(\varphi) - \bar \varphi_\xi p_\xi}_V^2
	      + \sum_{\xi \in \Upsilon_E^{\rm ext}}  \norm{P_{\subsod_\xi}(\varphi)}_V^2
	      )^\frac{1}{2}}{\norm{\varphi}_V},
\end{eqnarray*}
where we have used that by construction $\bar \varphi_\xi p_\xi \in { \tilde V \cap \subsod_\xi}$ for all $\xi \in
\Upsilon_E^{\rm int}$.
For any $\varphi \in V_h$ and $\xi \in \Upsilon_E^{\rm int}$ we then have 
$\norm{P_{\subsod_\xi}(\varphi) - \bar \varphi_\xi p_\xi}_V^2 \leq 2c_I^2 \|\varphi\|_{\Omega_\xi,1}^2 + 2
    \norm{(\varphi - \bar \varphi_\xi) p_\xi}_V^2 $, where
\begin{align*}
    \norm{(\varphi - \bar \varphi_\xi) p_\xi}_V^2 
    &\leq \int_{\Omega_\xi} 
      2|\nabla (\varphi - \bar \varphi_\xi) p_\xi |^2(x) + 2| (\varphi - \bar \varphi_\xi) \nabla p_\xi |^2(x) dx \\
      &\hspace*{5em} + \|(\varphi - \bar \varphi_\xi) p_\xi\|^2_{L^2(\Omega_\xi)}.
 \end{align*}
With a rescaled Poincar$\rm \acute{e}$-type inequality 
$$
	\norm{\varphi - \bar \varphi_\xi}_{L^2(\Omega_\xi)} \leq c_{\mathrm pc} H_\xi \norm{\nabla \varphi}_{L^2(\Omega_\xi)},
$$
and
$
	\norm{\varphi - \bar \varphi_\xi}_{L^2(\Omega_\xi)} \leq \norm{\varphi}_{L^2(\Omega_\xi)},
$
we get
\begin{align*}
   \int_{\Omega_\xi} 
   2|\nabla (\varphi - \bar{\varphi}_\xi) p_\xi |^2(x) &+ 2| (\varphi - \bar \varphi_\xi) \nabla p_\xi |^2(x) dx + \|(\varphi - \bar \varphi_\xi) p_\xi\|^2_{L^2(\Omega_\xi)}\\
       & \qquad\qquad\leq (2 + 2(\pufuncconstant c_{\mathrm pc})^2 )  \norm{\nabla \varphi}^2_{L^2(\Omega_\xi)} +
   \|\varphi\|^2_{L^2(\Omega_\xi)}\\
 \end{align*}
In analogy we obtain for the boundary terms, i.e. $\xi \in \Upsilon_E^{\rm ext}$, the estimates
$\norm{P_{\subsod_\xi}(\varphi)}_V^2 \leq 2c_I^2 \|\varphi\|_{\Omega_\xi,1} + 2
    \norm{\varphi p_\xi}_V^2 $, and
\begin{align*}
    \norm{ \varphi p_\xi }_V^2 
    &\leq \int_{\Omega_\xi} 
    2|\nabla \varphi p_\xi|^2(x) + 2| \varphi \nabla p_\xi |^2(x) dx + \| \varphi  p_\xi \|^2_{L^2(\Omega_\xi)} \\
    & \leq (2 + 2(\pufuncconstant c_{\mathrm pc})^2) \norm{\nabla \varphi}^2_{L^2(\Omega_\xi)} +
      \|\varphi\|^2_{L^2(\Omega_\xi)}.
 \end{align*}
using a rescaled Poincar$\rm \acute{e}$-type inequality which holds for $\xi \in \Upsilon_2^{\rm ext}$ as 
$\varphi \in V_h$ has zero boundary values, i.e.
$$
	\norm{\varphi}_{L^2(\Omega_\xi)} \leq c_{\mathrm pc} H_\xi \norm{\nabla \varphi}_{L^2(\Omega_\xi)}.
$$
Summing up all contributions we then have

\begin{align*}
  \sum_{\xi \in \Upsilon_E^{\rm int}} & \norm{P_{\subsod_\xi}(\varphi) - \bar \varphi_\xi p_\xi}_V^2
    + \sum_{\xi \in \Upsilon_E^{\rm ext}}  \norm{P_{\subsod_\xi}(\varphi)}_V^2 \\
	      &\leq \sum_{\xi \in \Upsilon} 2c_I^2\|\varphi\|_{\Omega_\xi,1}^2 + 2\left[ (2 + 2(\pufuncconstant c_{\mathrm pc})^2) \norm{\nabla
      \varphi}^2_{L^2(\Omega_\xi)} + \|\varphi\|^2_{L^2(\Omega_\xi)} \right]\\
	    & \leq (4 + 2c_I^2 + 4(\pufuncconstant c_{\mathrm pc})^2) \ovlpconstant  \norm{\varphi}^2_{V}.
\end{align*}	      
This gives us the estimate.
\end{proof}


Proposition \ref{thm:pu_bound} gives a bound on $\puconstantV$ that depends on the contrast of the underlying 
diffusion coefficient if $p_\xi \in \tilde V$, $\xi \in \Upsilon_E^{\rm int}$ is chosen as the MsFEM type hat functions 
as suggested in Section \ref{sec:decomposition} above. However, it is independent on the mesh sizes $h, H$. 
A crucial ingredient to obtain this bound is the fact that we 
included this macroscopic partition of unity in our reduced approximation space $\tilde V$. 
If alternatively we would chose $p_\xi \in \tilde V$ to be the traditional Lagrange hat functions, 
the bound on $\puconstantV$ in  Proposition \ref{thm:pu_bound} would be independent of the contrast.
In fact, we might expect that $\puconstantV$ behaves much better then the upper bound due to 
the approximation properties of the reduced space. It would actually be possible to compute $\puconstantV$
for given $V_h, \tilde V$ which would however be computationally expensive and thus not of any use 
in pratical applications. Proposition \ref{thm:pu_bound}, however shows that the localized a posteriori 
error estimator in Corollary \ref{thm:abstract_error_estimate} in the context of ArbiLoMod is indeed robust and efficient, 
even with respect to $H \to 0$.


Comparing with other localized RB and multiscale methods,
one observes a difference in the scaling of the efficiency constants.
While in our case, $c_{pu}$ is independent of both $h$ and $H$,
the a posteriori error estimator published for LRBMS has a
$H/h$ dependency
\cite[Theorem 4.6]{OS15}
and in the certification framework for SCRBE, a $h^{-1/2}$ scaling appears
\cite[Proposition 4.5]{Smetana2015a}.
The error estimators published for GMsFEM in \cite{Chung2014b} also 
have no dependency on $H$ or $h$. However, they also rely on specific
properties of the basis generation. 
Also in the analysis of the ``Discontinuous Galerkin Reduced Basis Element Method'' (DGRBE),
Pacciarini et.al.~have
a factor of $h^{-1/2}$ in the
a priori analysis \cite{FrancescaAntonietti2015}
and in the a posteriori error estimator \cite{PhdPacciarini}.
\subsection{Local Efficiency}
So far we did not use properties of the bilinear form other than
coercivity and continuity. Assuming locality of the bilinear form 
as in \eqref{eq:heat_equation}, we get a local efficiency
estimate and an improved global efficiency estimate.
\begin{proposition}
\label{thm:local_efficiency}
Let the bilinear form $a$ be given by \eqref{eq:heat_equation}.
Then we have the localized efficiency estimate
\begin{equation}
\norm{R_\mu(\widetilde{u}_\mu)}_{\subsod_\xi'} \leq
\gamma_\mu |u_\mu - \widetilde{u}_\mu|_{\Omega_\xi, 1}.
\end{equation}
\end{proposition}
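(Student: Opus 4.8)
The plan is to exploit Galerkin orthogonality together with the spatial locality of the bilinear form $a_\mu$. First I would record that, since $\dualpair{f_\mu}{\varphi} = a_\mu(u_\mu, \varphi)$ for all $\varphi \in V_h$, the residual satisfies the consistency identity $\dualpair{R_\mu(\widetilde u_\mu)}{\varphi} = a_\mu(u_\mu - \widetilde u_\mu, \varphi)$ for every $\varphi \in V_h$, and in particular
\begin{equation*}
\norm{R_\mu(\widetilde u_\mu)}_{\subsod_\xi'} = \sup_{\varphi \in \subsod_\xi \setminus \{0\}} \frac{a_\mu(u_\mu - \widetilde u_\mu, \varphi)}{\norm{\varphi}_V}.
\end{equation*}

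Next I would invoke the structural fact, noted right after the definition of the overlapping space decomposition, that $\subsod_\xi \subseteq H^1_0(\Omega_\xi)$: every $\varphi \in \subsod_\xi$ has support contained in $\overline{\Omega_\xi}$, so $\nabla \varphi$ vanishes almost everywhere outside $\Omega_\xi$. Substituting the concrete diffusion form \eqref{eq:heat_equation} for $a_\mu$ then collapses the integral to $\Omega_\xi$,
\begin{equation*}
a_\mu(u_\mu - \widetilde u_\mu, \varphi) = \int_{\Omega_\xi} \sigma_\mu\, \nabla(u_\mu - \widetilde u_\mu) \cdot \nabla \varphi \dx,
\end{equation*}
so that, tested against functions from $\subsod_\xi$, the residual only sees the error on $\Omega_\xi$.

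Then I would conclude with a pointwise bound on the coefficient followed by Cauchy--Schwarz in $L^2(\Omega_\xi)$: using $|\sigma_\mu| \leq \norm{\sigma_\mu}_{L^\infty(\Omega_\xi)} \leq \norm{\sigma_\mu}_{L^\infty(\Omega)} \leq \gamma_\mu$ one gets $|a_\mu(u_\mu - \widetilde u_\mu, \varphi)| \leq \gamma_\mu |u_\mu - \widetilde u_\mu|_{\Omega_\xi,1}\, |\varphi|_{\Omega_\xi,1}$, and since $\varphi$ is supported in $\Omega_\xi$ we have $|\varphi|_{\Omega_\xi,1} = |\varphi|_{\Omega,1} \leq \norm{\varphi}_V$. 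Dividing by $\norm{\varphi}_V$ and passing to the supremum over $\varphi \in \subsod_\xi \setminus \{0\}$ yields the claim.

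I do not anticipate a genuine obstacle; the whole argument is short once locality is used. The two points that deserve a moment of care are: (a) taking the supremum over $\subsod_\xi$ rather than all of $V_h$, so that the support property of the test functions can actually be exploited; and (b) justifying that the local essential supremum $\norm{\sigma_\mu}_{L^\infty(\Omega_\xi)}$ is dominated by the global continuity constant $\gamma_\mu$, which is immediate under the natural choice $\gamma_\mu = \norm{\sigma_\mu}_{L^\infty(\Omega)}$ for the form \eqref{eq:heat_equation}.
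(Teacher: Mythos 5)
Your argument is correct and follows essentially the same route as the paper: the residual--error identity, the collapse of the integral to $\Omega_\xi$ via the support of test functions in $\subsod_\xi$, and a Cauchy--Schwarz bound with $\sigma_\mu \leq \gamma_\mu$. The paper's proof is just a more compressed version of yours, leaving the pointwise coefficient bound and the estimate $|\varphi|_{\Omega_\xi,1} \leq \norm{\varphi}_V$ implicit.
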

\begin{proof}
Using the error identity 
\begin{equation}
a_\mu(u_\mu - \widetilde{u}_\mu, \varphi) = \dualpair{R_\mu(\widetilde{u}_\mu) }{ \varphi },
\nonumber
\end{equation}
we obtain for any $\varphi \in \subsod_\xi$
\begin{align*}
	\dualpair{R_\mu(\widetilde{u}_\mu)}{\varphi} &= \int_\Omega \sigma_\mu(x)\nabla(u_\mu - \widetilde{u}_\mu)(x) \nabla \varphi(x) dx \\
	               &= \int_{\Omega_\xi} \sigma_\mu(x)\nabla(u_\mu - \widetilde{u}_\mu)(x) \nabla \varphi(x) dx \\
		       &\leq \gamma_\mu |u_\mu - \widetilde{u}_\mu|_{\Omega_\xi,1} \norm{\varphi}_V,
\end{align*}
from which the statement follows.
\end{proof}
\begin{remark}
Under the assumptions of Proposition \ref{thm:local_efficiency},
it is easy to see that we have the improved efficiency estimate
\begin{equation}
\Delta_{loc}(\widetilde{u}_\mu)
	\leq \frac{\gamma_\mu  \sqrt{\ovlpconstant}  \puconstantV}{\alpha_\mu}   \norm{ u_\mu - \widetilde{u}_\mu }_V.
	\nonumber
\end{equation}
In many cases, a better constant can be found. Finite Element ansatz functions
are usually not orthogonal if they share support. So if 
$\ovlpconstant$ spaces have support in one point in space, they have
to be placed in different groups when designing a partition
for Proposition \ref{thm:efficiency}, so $\ovlpconstant \leq J$
 (cf.\ \cite[p. 67]{toselli2005domain}).
\end{remark}
\subsection{Relative Error Bounds}
From the error estimators for the absolute error, we can construct 
error estimators for the relative error. Estimates for the relative error
are given in \cite[Proposition 4.4]{HesthavenRozzaEtAl2016},
but the estimates used here are slightly sharper.
\begin{proposition}
Assuming $\norm{\widetilde u_\mu}_V > \Delta(\widetilde u_\mu)$
and  $\norm{\widetilde u_\mu}_V > \Delta_{loc}(\widetilde u_\mu)$,
the error estimators defined by
\begin{align*}
\Delta^{rel}(\widetilde u_\mu) 
    &:= \frac{\Delta(\widetilde u_\mu)}{\norm{\widetilde u_\mu}_V - \Delta(\widetilde u_\mu)}
\\
\Delta^{rel}_{loc}(\widetilde u_\mu) 
    &:= \frac{\Delta_{loc}(\widetilde u_\mu)}{\norm{\widetilde u_\mu}_V - \Delta_{loc}(\widetilde u_\mu)}
\end{align*}
are robust and efficient:
\begin{eqnarray*}
\frac{\norm{u_\mu - \widetilde u_\mu}_V}{\norm{u_\mu}_V}
&\leq \Delta^{rel}(\widetilde u_\mu)
&\leq \left( 1 + 2 \Delta^{rel} (\widetilde u_\mu)\right) \frac{\gamma_\mu}{\alpha_\mu}
\frac{\norm{u_\mu - \widetilde u_\mu}_V}{\norm{u_\mu}_V}
\\
\frac{\norm{u_\mu - \widetilde u_\mu}_V}{\norm{u_\mu}_V}
&\leq \Delta^{rel}_{loc}(\widetilde u_\mu)
&\leq \left( 1 + 2 \Delta^{rel}_{loc}(\widetilde u_\mu) \right) \frac{\gamma_\mu \sqrt{J} \puconstantV}{\alpha_\mu}
\frac{\norm{u_\mu - \widetilde u_\mu}_V}{\norm{u_\mu}_V}
\end{eqnarray*}
\end{proposition}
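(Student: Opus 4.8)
The plan is to derive both chains of inequalities from the already-established absolute error estimators
\[
\norm{u_\mu - \widetilde u_\mu}_V \leq \Delta(\widetilde u_\mu) \leq \tfrac{\gamma_\mu}{\alpha_\mu}\norm{u_\mu - \widetilde u_\mu}_V
\]
of \eqref{eq:global_estimator} and the analogous localized bound from Corollary~\ref{thm:abstract_error_estimate}, together with two elementary facts: a lower bound $\norm{u_\mu}_V \geq \norm{\widetilde u_\mu}_V - \norm{u_\mu - \widetilde u_\mu}_V$ coming from the triangle inequality, and the assumption $\norm{\widetilde u_\mu}_V > \Delta(\widetilde u_\mu)$ (resp.\ $> \Delta_{loc}$) which guarantees that all denominators appearing are strictly positive. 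Since the two statements have identical structure, I would prove the bound for $\Delta^{rel}$ in detail and then remark that the localized case follows verbatim with $\Delta_{loc}$ in place of $\Delta$ and the efficiency constant $\tfrac{\gamma_\mu\sqrt J\puconstantV}{\alpha_\mu}$ in place of $\tfrac{\gamma_\mu}{\alpha_\mu}$.

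For the lower (robustness) bound on $\Delta^{rel}$: from $\norm{u_\mu - \widetilde u_\mu}_V \leq \Delta(\widetilde u_\mu)$ and the triangle inequality $\norm{u_\mu}_V \geq \norm{\widetilde u_\mu}_V - \norm{u_\mu - \widetilde u_\mu}_V \geq \norm{\widetilde u_\mu}_V - \Delta(\widetilde u_\mu) > 0$, dividing the two inequalities gives
\[
\frac{\norm{u_\mu - \widetilde u_\mu}_V}{\norm{u_\mu}_V} \leq \frac{\Delta(\widetilde u_\mu)}{\norm{\widetilde u_\mu}_V - \Delta(\widetilde u_\mu)} = \Delta^{rel}(\widetilde u_\mu),
\]
which is the claimed robustness. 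For the upper (efficiency) bound I would start from $\Delta(\widetilde u_\mu) \leq \tfrac{\gamma_\mu}{\alpha_\mu}\norm{u_\mu-\widetilde u_\mu}_V$ and control the denominator $\norm{\widetilde u_\mu}_V - \Delta(\widetilde u_\mu)$ from below by $\norm{u_\mu}_V$. Using $\norm{\widetilde u_\mu}_V \geq \norm{u_\mu}_V - \norm{u_\mu - \widetilde u_\mu}_V$ we get
\[
\norm{\widetilde u_\mu}_V - \Delta(\widetilde u_\mu) \geq \norm{u_\mu}_V - \norm{u_\mu - \widetilde u_\mu}_V - \Delta(\widetilde u_\mu) \geq \norm{u_\mu}_V - 2\Delta(\widetilde u_\mu),
\]
so that
\[
\Delta^{rel}(\widetilde u_\mu) \leq \frac{\tfrac{\gamma_\mu}{\alpha_\mu}\norm{u_\mu - \widetilde u_\mu}_V}{\norm{u_\mu}_V - 2\Delta(\widetilde u_\mu)}.
\]
The remaining task is purely algebraic: write $\norm{u_\mu}_V - 2\Delta(\widetilde u_\mu) = \norm{u_\mu}_V\bigl(1 - 2\Delta^{rel}\cdot\tfrac{\norm{\widetilde u_\mu}_V - \Delta}{\norm{u_\mu}_V}\bigr)$ is not quite the cleanest route; instead I would multiply through and rearrange to isolate $\Delta^{rel}(\widetilde u_\mu)$, arriving at an inequality of the form $\Delta^{rel} \leq (1 + 2\Delta^{rel})\tfrac{\gamma_\mu}{\alpha_\mu}\tfrac{\norm{u_\mu - \widetilde u_\mu}_V}{\norm{u_\mu}_V}$ by absorbing the $2\Delta$ term using $\Delta(\widetilde u_\mu) \leq \Delta^{rel}(\widetilde u_\mu)\norm{u_\mu}_V$ (which itself follows from $\Delta = \Delta^{rel}(\norm{\widetilde u_\mu}_V - \Delta) \leq \Delta^{rel}\norm{\widetilde u_\mu}_V$ and $\norm{\widetilde u_\mu}_V \leq \norm{u_\mu}_V + \Delta$; a short bootstrap closes this).

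The main obstacle I anticipate is not conceptual but bookkeeping: getting the factor $(1 + 2\Delta^{rel})$ to come out exactly, rather than some looser constant, requires choosing the right intermediate estimate for $\Delta(\widetilde u_\mu)$ in terms of $\Delta^{rel}(\widetilde u_\mu)$ and $\norm{u_\mu}_V$, and being careful that every denominator stays positive (which is exactly what the two hypotheses $\norm{\widetilde u_\mu}_V > \Delta$ and $\norm{\widetilde u_\mu}_V > \Delta_{loc}$ are there to ensure). Once the scalar inequality chain is set up correctly for $\Delta^{rel}$, the localized statement is obtained by replacing $\Delta$ with $\Delta_{loc}$ throughout and invoking Corollary~\ref{thm:abstract_error_estimate} instead of \eqref{eq:global_estimator}; no new ideas are needed.
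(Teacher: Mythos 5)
Your robustness (lower) bound is exactly the paper's argument: $\norm{\widetilde u_\mu}_V - \Delta(\widetilde u_\mu) \leq \norm{u_\mu}_V$ from the triangle inequality, then enlarge the numerator and shrink the denominator. The efficiency (upper) bound is where your sketch has a genuine gap. First, the intermediate step $\Delta^{rel}(\widetilde u_\mu) \leq \frac{\gamma_\mu}{\alpha_\mu}\norm{u_\mu-\widetilde u_\mu}_V \big/ \left(\norm{u_\mu}_V - 2\Delta(\widetilde u_\mu)\right)$ is not legitimate as written: the hypotheses only guarantee $\norm{\widetilde u_\mu}_V - \Delta(\widetilde u_\mu) > 0$, not $\norm{u_\mu}_V - 2\Delta(\widetilde u_\mu) > 0$, and if the latter is nonpositive the reciprocal inequality flips or is meaningless. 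Second, and more importantly, the ``bootstrap'' you propose to absorb the leftover term --- using $\Delta(\widetilde u_\mu) \leq \Delta^{rel}(\widetilde u_\mu)\norm{u_\mu}_V$ --- produces $\Delta^{rel} \leq E + 2(\Delta^{rel})^2$ with $E := \frac{\gamma_\mu}{\alpha_\mu}\frac{\norm{u_\mu-\widetilde u_\mu}_V}{\norm{u_\mu}_V}$, and this quadratic inequality does \emph{not} imply the stated bound $\Delta^{rel} \leq (1+2\Delta^{rel})E$: for $E \geq 1/8$ the constraint $\Delta^{rel} - 2(\Delta^{rel})^2 \leq E$ is satisfied by arbitrarily large $\Delta^{rel}$, while the target bound is not. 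So the step you labelled ``purely algebraic bookkeeping'' is precisely the step that fails with your chosen intermediate estimate.

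The fix is small and turns your argument into the paper's. Do not divide by $\norm{u_\mu}_V - 2\Delta(\widetilde u_\mu)$; multiply out instead. From $\Delta^{rel}(\widetilde u_\mu)\left(\norm{\widetilde u_\mu}_V - \Delta(\widetilde u_\mu)\right) = \Delta(\widetilde u_\mu)$ and $\norm{u_\mu}_V \leq \norm{\widetilde u_\mu}_V + \Delta(\widetilde u_\mu) = \left(\norm{\widetilde u_\mu}_V - \Delta(\widetilde u_\mu)\right) + 2\Delta(\widetilde u_\mu)$ you get $\Delta^{rel}(\widetilde u_\mu)\norm{u_\mu}_V \leq \left(1+2\Delta^{rel}(\widetilde u_\mu)\right)\Delta(\widetilde u_\mu)$, and now apply the efficiency bound $\Delta(\widetilde u_\mu) \leq \frac{\gamma_\mu}{\alpha_\mu}\norm{u_\mu - \widetilde u_\mu}_V$ to \emph{this} occurrence of $\Delta(\widetilde u_\mu)$ rather than $\Delta \leq \Delta^{rel}\norm{u_\mu}_V$; dividing by $\norm{u_\mu}_V$ gives the claim. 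This is exactly the paper's identity $\norm{\widetilde u_\mu}_V + \Delta(\widetilde u_\mu) = \left(\norm{\widetilde u_\mu}_V - \Delta(\widetilde u_\mu)\right)\left(1 + 2\Delta^{rel}(\widetilde u_\mu)\right)$ in additive form; the localized case then follows verbatim, as you say.
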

\begin{proof}
Realizing that $\left(\norm{\widetilde u_\mu}_V - \Delta(\widetilde u_\mu) \right)\leq \norm{u_\mu}_V$,
it is easy to see that
\begin{equation}
\frac{\norm{u_\mu - \widetilde u_\mu}_V}{\norm{u_\mu}_V}
\leq
\frac{\Delta(\widetilde u_\mu)}{\norm{u_\mu}_V}
\leq
\frac{\Delta(\widetilde u_\mu)}{\norm{\widetilde u_\mu}_V - \Delta(\widetilde u_\mu)},
\end{equation}
which is the first inequality. 
Using 
$$
\norm{\widetilde u_\mu}_V + \Delta(\widetilde u_\mu) = \left(\norm{\widetilde u_\mu}_V - \Delta(\widetilde u_\mu) \right) \left( 1 + 2 \Delta^{rel}(\widetilde u_\mu) \right)
$$
the second inequality can be shown:
\begin{align*}
\Delta^{rel}(\widetilde u_\mu) = 
\frac{\Delta(\widetilde u_\mu)}{\norm{\widetilde u_\mu}_V - \Delta(\widetilde u_\mu)}
& \leq \frac{\gamma_\mu}{\alpha_\mu} \frac{\norm{u_\mu - \widetilde u_\mu}_V}{\norm{\widetilde u_\mu}_V - \Delta(\widetilde u_\mu)}\\
&=  \frac{\gamma_\mu}{\alpha_\mu} \frac{\norm{u_\mu - \widetilde u_\mu}_V}{\norm{\widetilde u_\mu}_V + \Delta(\widetilde u_\mu)} \left( 1 + 2 \Delta^{rel}(\widetilde u_\mu) \right)\\
&\leq \frac{\gamma_\mu}{\alpha_\mu} \frac{\norm{u_\mu - \widetilde u_\mu}_V}{\norm{u_\mu}_V} \left( 1 + 2 \Delta^{rel}(\widetilde u_\mu) \right).
\end{align*}
The inequalities for $\Delta^{rel}_{loc}$ can be shown accordingly.
\end{proof}

Reviewing the five desired properties of an a posteriori error estimator
at the beginning of this section, we see that the presented
error estimator is robust and efficient (1) and is online-offline
decomposable (2). Parallelization can be done over the spaces
$\subsod_\xi$. Only online data has to be transferred, so there is 
little communication (3). The online-offline decomposition
only has to be repeated for a space $\subsod_\xi$, if a new basis
function with support in $\Omega_\xi$ was added. So reuse in unchanged
regions is possible (4). How the adaptive enrichment is steered (5)
will be described in the following section.

\section{Enrichment Procedure}
\label{sec:enrichment}
The first ArbiLoMod solution is obtained using the initial reduced local subspaces generated
using the local training and greedy procedures described in Section
\ref{sec:training_and_greedy}.
If this solution is not good enough according
to the a posteriori error estimator, the solution is improved by
enriching the reduced local subspaces and then solving the global reduced
problem again.
The full procedure is given in Algorithm \ref{algo:online_enrichment}
and described in the following.

For the enrichment, we use the overlapping local subspaces introduced
in \eqref{eq:overlapping_space_def}, which are also used for the
a posteriori error estimator.
Local problems are solved in the overlapping spaces.
The original bilinear form is used, but as a right hand side
the residual of the last reduced solution is employed.
The local spaces and the parameter values for which the enrichment
is performed are selected in a D\"orfler-like \cite{dorfler1996convergent}
algorithm.
The thus obtained local solutions $u_l$ do not fit into our
space decomposition, as they lie in one of the overlapping spaces, not
in one of the local subspaces used for the basis construction. Therefore,
the $u_l$ are decomposed using the projection operators $\subsmap_{\subssd_\xi}$
defined in Definition~\ref{def:local_projection_operators}.
In the setting of our numerical example (Section~\ref{sec:numerical_example}), this decomposition yields at most 9 parts
(one codim-2 part, four codim-1 parts and four codim-0 parts).
Of these parts, the one worst approximated by the existing reduced
local subspace is selected for enrichment. ``Worst approximated'' is here defined
as having the largest part orthogonal to the existing reduced local subspace.
We denote the part of $\subsmap_{\subssd_\xi}(u_l)$
orthogonal to $\rsubssd_\xi$ w.r.t.\ the inner product of $V$
by $(\subsmap_{\subssd_\xi}(u_l))^\perp$.

To avoid communication, cell spaces $\rsubssd_\xi, \xi \in \Upsilon_0$
are not enriched at this point. Such an enrichment would require the
communication of the added basis vector, which might be large. Instead,
only the other spaces are enriched, and the cell spaces associated with $\Upsilon_0$
are regenerated using the greedy procedure from Section \ref{sec:codim_0_greedy}.
For the other spaces, a strong compression of the basis vectors is possible
(cf.\ Section \ref{sec:runtime_and_communication}).

This selection of the local spaces can lead to one reduced local space
being enriched several times in one iteration. Numerical experiments have
shown that this leads to poorly conditioned systems, as
the enrichment might try to introduce the same feature into a local basis twice.
To prevent this, the enrichment algorithm enriches each reduced
local subspace at most once per
iteration.
\begin{algorithm2e}
\DontPrintSemicolon
\SetAlgoVlined
\SetKwFunction{OnlineEnrichment}{OnlineEnrichment}
\SetKwInOut{Input}{Input}
\SetKwInOut{Output}{Output}
\Fn{\OnlineEnrichment{$d, \mathrm{tol}$}}{
  \Input{enrichment fraction $d$,\\ target error $\mathrm{tol}$}
  \While{
    $\max\limits_{\mu \in \Xi}
    \ \Delta(\wt{u}_\mu) > \mathrm{tol}$
  }%
  {
    $E \leftarrow \emptyset$\;
    \While{
      $\left(\sum\limits_{(\mu,\xi) \in E} \norm{R_\mu(\widetilde{u}_\mu)}_{(\subsod_\xi)'} \right)
      / \left(\sum\limits_{(\mu,\xi) \in (\Xi \times \Upsilon_E)} \norm{R_\mu(\widetilde{u}_\mu)}_{(\subsod_\xi)'} \right)< d$
    }%
    {
    $\hat \mu,\hat \xi \leftarrow
    \argmax\limits_
    {(\mu, \xi) \in \left(\Xi \times \Upsilon_E\right) \setminus E}
    \norm{R_\mu(\widetilde{u}_\mu)}_{(\subsod_\xi)'}$\;
    $E \leftarrow E \cup (\hat \mu, \hat \xi)$\;
    }
    \tcc{$S$ is used for double enrichment protection.}
    $S \leftarrow \emptyset$\;
    \For{$(\mu,\xi) \in E$}
    {
    \Find{$u_l \in \subsod_{\xi}$ \st}{
      $a_{\mu}(u_l, \varphi) =
      \dualpair{R_\mu(\widetilde{u}_{\mu})}{\varphi} \qquad \forall
      \varphi \in \subsod_{\xi}$}
    $\check\xi \leftarrow \argmax\limits_{\xi \in \Upsilon \setminus \Upsilon_0}\norm{ (\subsmap_{\subssd_\xi}(u_l))^\perp } _{\subssd_\xi}$\;
    \If{$\check \xi \notin S$}{
      $\rsubssd_{\check\xi} \leftarrow \rsubssd_{\check\xi} \oplus \spanset((\subsmap_{\subssd_{\check\xi}}(u_l))^\perp)$\;
      $S \leftarrow S \cup \check\xi$\;
    }
    }
    run \texttt{LocalGreedys}\;
    recalculate reduced solutions\;
  }
}
\caption{Online Enrichment}
\label{algo:online_enrichment}
\end{algorithm2e}


\section{Handling Local Changes}
\label{sec:onchange}
When the enrichment iteration has converged to a sufficient accuracy,
the obtained solution is handed over to the user. The envisioned implementation
should then wait for the user to modify the model under consideration.
After a change to the model simulated, the full procedure described
above is repeated, but wherever possible, existing data is reused.
On the changed domains, a new mesh is generated, if necessary. 
Basis vectors having support in the changed region are discarded.
The domain decomposition is never changed and thus has to be 
independent of the geometry.
The potential savings are not only in the reduced basis generation,
but also in the assembly of the system matrices. In an implementation
featuring localized meshing and assembly domain by domain, 
meshing and assembly has to be repeated only in the domains
affected by the change.
This approach of recomputing everything in the changed
region was chosen because it allows a robust implementation
without any assumptions about the changes. 

\section{Runtime and Communication}
\label{sec:runtime_and_communication}
A major design goal of ArbiLoMod is communication avoidance and
scalability in parallel environments. Although the main topic of this
publication are the mathematical properties, we
want to highlight the possibilities offered by ArbiLoMod to
reduce communication in a parallel setup.

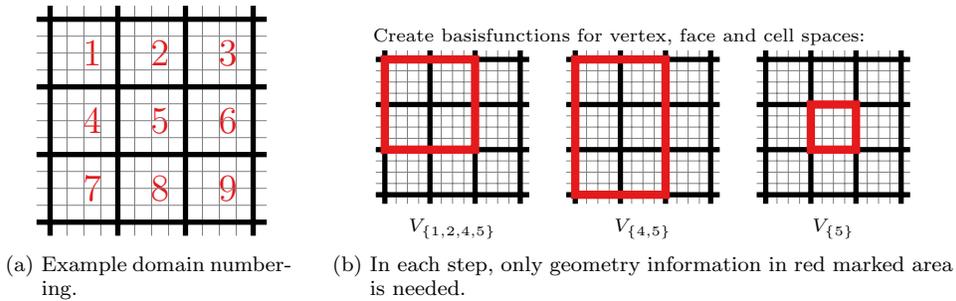
\begin{figure}
\footnotesize
\centering
\begin{subfigure}[b]{0.3\textwidth}
\centering
\begin{minipage}{0.8\linewidth}
  \begin{tikzpicture}[scale=0.9]
    \node at (0.5+0.125,2.5) [color=Xred] {\huge 1};
    \node at (1.5+0.125,2.5) [color=Xred] {\huge 2};
    \node at (2.5+0.125,2.5) [color=Xred] {\huge 3};
    \node at (0.5+0.125,1.5) [color=Xred] {\huge 4};
    \node at (1.5+0.125,1.5) [color=Xred] {\huge 5};
    \node at (2.5+0.125,1.5) [color=Xred] {\huge 6};
    \node at (0.5+0.125,0.5) [color=Xred] {\huge 7};
    \node at (1.5+0.125,0.5) [color=Xred] {\huge 8};
    \node at (2.5+0.125,0.5) [color=Xred] {\huge 9};
    \draw [step=2.5mm,color=gray,very thin] (-0.2,-0.2) grid (3.2,3.2);
    \draw [step=1cm,black,line width=0.7mm] (-0.2,-0.2) grid (3.2,3.2);
  \end{tikzpicture}
\end{minipage}
\caption{Example domain numbering.}
\end{subfigure}\hfill
\begin{subfigure}[b]{0.65\textwidth}
\centering
\def\myscale{0.6}
\begin{tabular}{ccc}
\multicolumn{3}{l}{Create basisfunctions for vertex, face and cell spaces:}\\
\begin{tikzpicture}[scale=\myscale]
\draw [step=2.5mm,color=gray,very thin] (-0.2,-0.2) grid (3.2,3.2);
\draw [step=1cm,black,line width=0.7mm] (-0.2,-0.2) grid (3.2,3.2);
\draw [color=Xred,line width=1.1mm] (0,1) rectangle (2,3);
\end{tikzpicture}&
\begin{tikzpicture}[scale=\myscale]
\draw [step=2.5mm,color=gray,very thin] (-0.2,-0.2) grid (3.2,3.2);
\draw [step=1cm,black,line width=0.7mm] (-0.2,-0.2) grid (3.2,3.2);
\draw [color=Xred,line width=1.1mm] (0,0) rectangle (2,3);
\end{tikzpicture}&
\begin{tikzpicture}[scale=\myscale]
\draw [step=2.5mm,color=gray,very thin] (-0.2,-0.2) grid (3.2,3.2);
\draw [step=1cm,black,line width=0.7mm] (-0.2,-0.2) grid (3.2,3.2);
\draw [color=Xred,line width=1.1mm] (1,1) rectangle (2,2);
\end{tikzpicture}\\[.5ex]
$V_{\{1,2,4,5\}}$&
$V_{\{4,5\}}$&
$V_{\{5\}}$\\
\end{tabular}
\caption{In each step, only geometry information in red marked area is needed.}
\end{subfigure}
\vspace*{-2ex}
\caption{
Before online enrichment, it is possible to compute all reduced basis
function having support on a domain ($\Omega_5$ here) using only
local information about the domain and its surrounding domains.
}
\label{fig:no_communication}
\end{figure}

Similar to overlapping Domain Decomposition methods we require, that
not only the local domain, but also an overlap region is available
locally. For a subdomain $\Omega_i$ the overlap region is the domain
itself and all adjacent domains, as depicted in Figure~\ref{fig:no_communication},
i.e.\ all subdomains in the neighborhood $\neigh_{\{i\}}$.
As the overlap region includes the
support of all training spaces, one can compute all initial reduced local subspaces
with support in $\Omega_i$ without further communication.
This work can be distributed on many nodes.
Afterwards, only reduced representations of the operator have to be communicated.
Using the operator decomposition
$a^b(u,v) = \sum_{i=1}^{N_D} a_{\Omega_i}^b(u,v)$, a global, reduced
operator is collected using an all-to-one communication of reduced matrices.
The global reduced problem is then solved on a single node.
It is assumed that the global, reduced system is sufficiently small.

If the accuracy is not sufficient, online enrichment is
performed. This step requires additional communication; first for the
evaluation of the error estimator and second to communicate new basis
vectors of reduced face spaces $\widetilde V_\xi, \ \xi \in
\Upsilon_1$.
Note that it is sufficient to communicate the local projection
$\subsmap_{\subsbd_\xi}(\psi)$ and reconstruct the actual basis
function as its extension, so that we save communication costs
proportional to the volume to surface ratio.

The evaluation of the localized error estimator requires the dual
norms of the residual in the localized spaces
$\norm{R_\mu(\widetilde u_\mu)}_{\subsod_\xi'}$. Using a stabilized online-offline
splitting \cite{BEOR14a}, the evaluation of the error estimator can be
evaluated for the full system using only reduced quantities. The
computation of the reduced operators is performed in parallel, similar
to the basis construction in the first step. The actual evaluation of
the error estimator can be performed on a single node and is
independent of the number of degrees of freedom of the high fidelity
model.

\label{sec:H_explanation}
An important parameter for ArbiLoMod's runtime is the domain size $H$.
The domain size affects the size of the local problems, 
the amount of parallelism in the algorithm,
and the size of the reduced global problem.
An $H$ too large leads to large local problems, while an $H$ too small
leads to a large reduced global problem (see also the numerical
example in Section \ref{sec:num_results} and especially
the results in Table \ref{tab:H_study}).
$H$ has to be chosen
to balance these two effects. As the focus of this manuscript 
is ArbiLoMod's mathematical properties, 
the question of choosing $H$ for optimal performance 
will be postponed to future research.


\section{Numerical Example}\label{sec:numerical_example}
The numerical experiments were performed using \mbox{pyMOR} \cite{pymor}.
The source code for the reproduction of all results presented in this
section are provided as a supplement to this paper. See the
README file therein for installation instructions.
Note that this code is kept simple to easily explore ArbiLoMod's
mathematical properties. It is not tuned for performance.
First results for electrodynamics were 
published in \cite{BEOR16}.
\label{sec:numerical_experiments}
\begin{figure}
\centering
\includegraphics[width=0.35\textwidth]{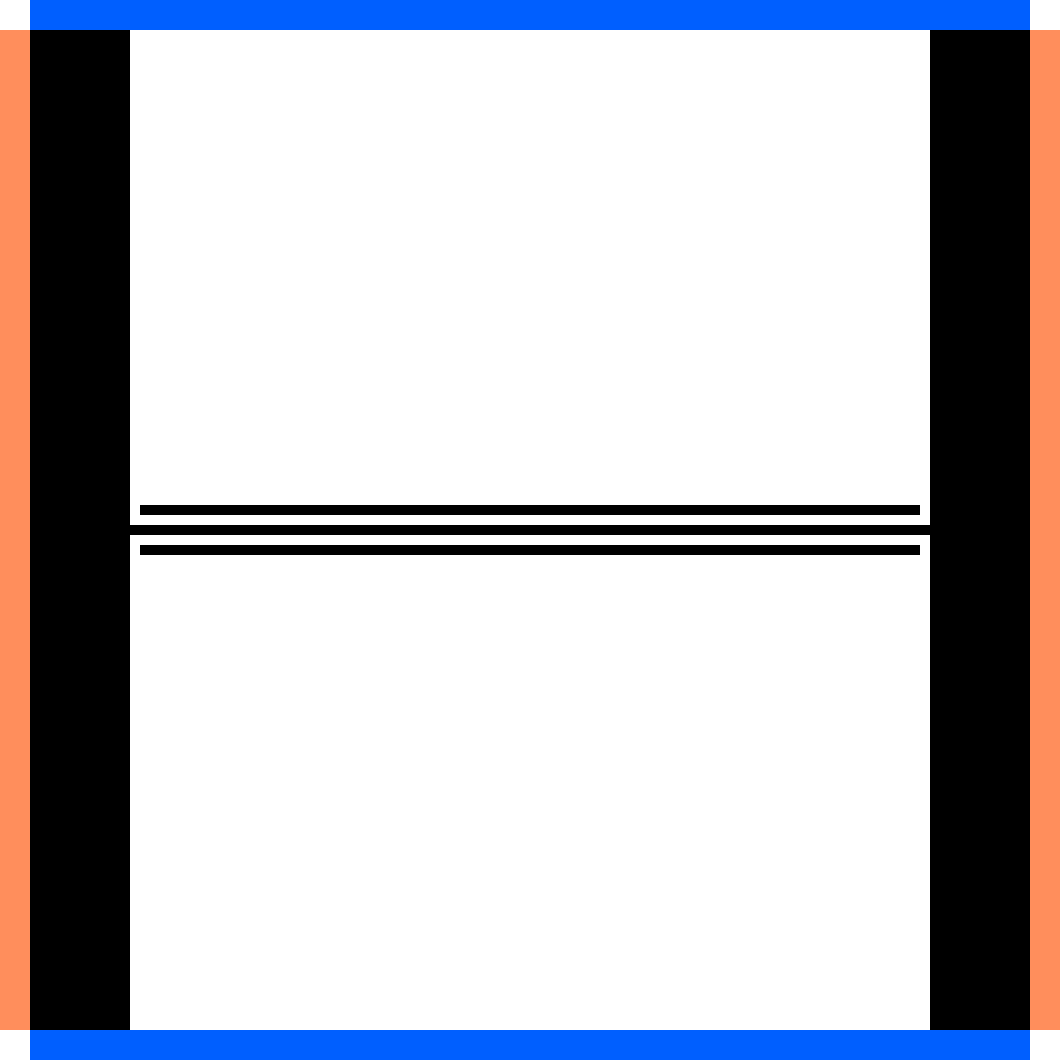}
\caption{First structure in sequence of simulated structures. Unit square with
high and low conductivity regions. White: constant conductivity region
($\sigma = 1$), black: parameterized high conductivity region
($\sigma_\mu = 1 + \mu$).
Homogeneous Neumann boundaries $\Gamma_N$ at top and bottom marked blue,
inhomogeneous Dirichlet boundaries $\Gamma_D$ at left and
right marked light red.}
\label{fig:geo1}
\end{figure}
\subsection{Problem Definition}
To illustrate the capabilities of ArbiLoMod we apply it to a sequence of
locally modified geometries. We consider
heat conduction without heat sources in the domain
on the unit square $\Omega := ]0,1[^2$.
We approximate $u$ solving
\begin{equation}
-\nabla \cdot ( \sigma_\mu \nabla u_\mu) = 0
\end{equation}
where $\sigma_\mu : \Omega \rightarrow \mathbb{R}$ is the heat conductivity.
We apply homogeneous Neumann boundaries at the top and the bottom:
$\nabla u \cdot n = 0 \ \mathrm{on} \ \Gamma_N
	:= ( ]0,1[ \ \times \ 0 ) \cup ( ]0,1[ \ \times \ 1 )
	$
and inhomogeneous Dirichlet boundaries at the left and right:
$
u = 1 \ \mathrm{on} \ \Gamma_{D,1} := 0 \ \times \ ]0,1[
$,
$
u = -1 \ \mathrm{on} \ \Gamma_{D,-1} := 1 \ \times \ ]0,1[
$
(see also Figure~\ref{fig:geo1}).
\begin{figure}
\footnotesize
\centering
\begin{tabular}{c|c|c|c|cc}
\# & left (zoom) & full structure & right (zoom) &
\multicolumn{2}{l}{solution for $\sigma_\mu = 10^5$}
\\
\hline
1 &
\encloseimage{\includegraphics[width=0.13\textwidth,frame,trim=50 450 850 450,clip]{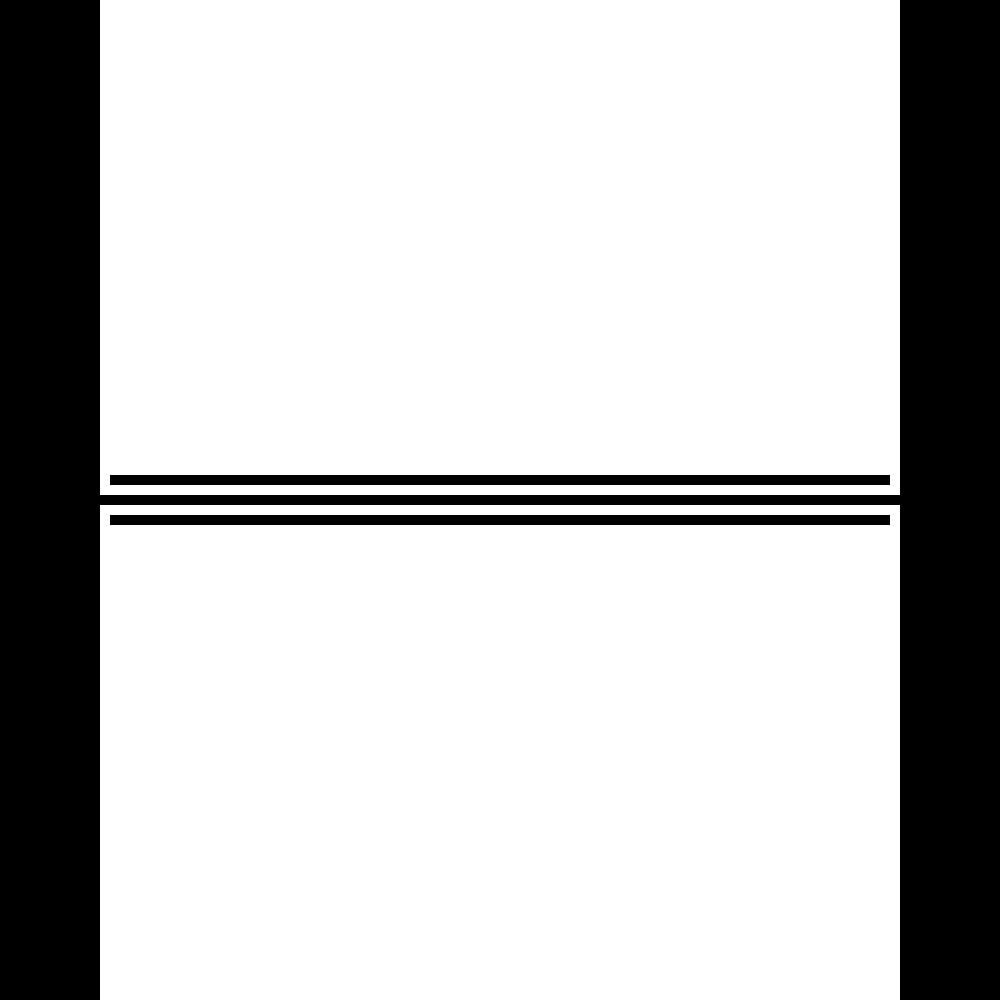}} &
\encloseimage{\includegraphics[width=0.13\textwidth,frame]{h_seq1.png}} &
\encloseimage{\includegraphics[width=0.13\textwidth,frame,trim=850 450 50 450,clip]{h_seq1.png}} &
\encloseimage{\includegraphics[width=0.13\textwidth]{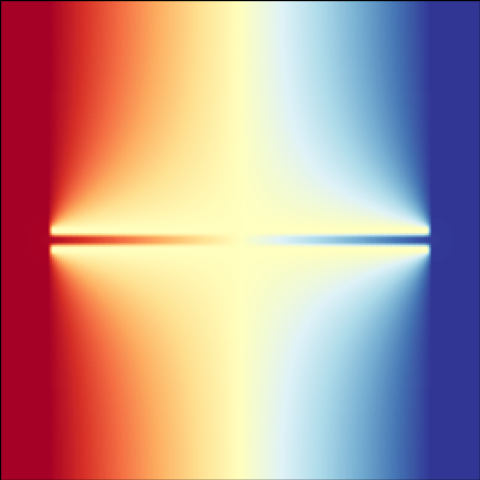}} &
\hspace{-25pt}
\encloseimage{
\includegraphics[width=0.05\textwidth]{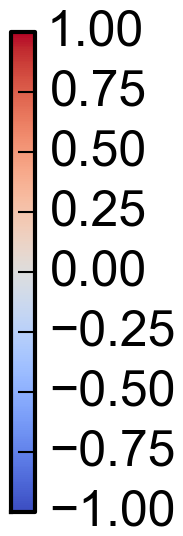}
}
\\
\hline
2 &
\encloseimage{\includegraphics[width=0.13\textwidth,frame,trim=50 450 850 450,clip]{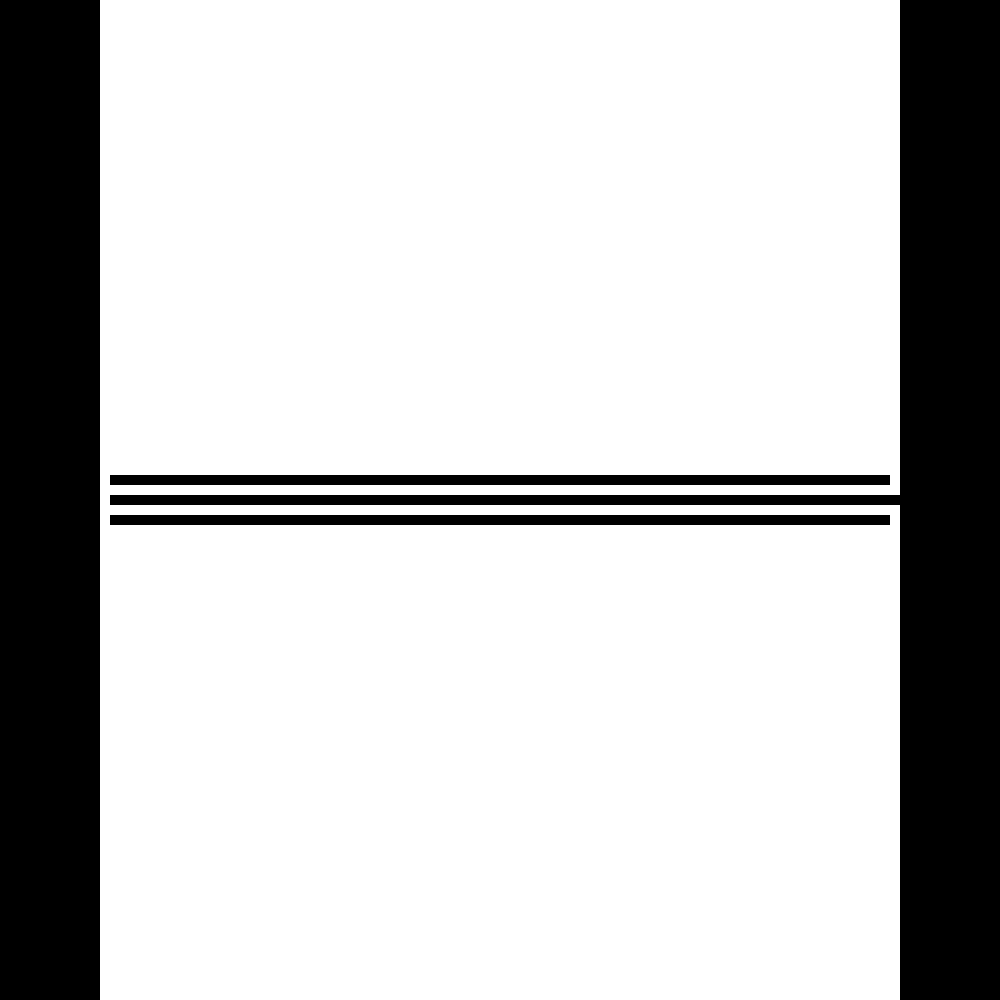}} &
\encloseimage{\includegraphics[width=0.13\textwidth,frame]{h_seq2.png}} &
\encloseimage{\includegraphics[width=0.13\textwidth,frame,trim=850 450 50 450,clip]{h_seq2.png}} &
\encloseimage{\includegraphics[width=0.13\textwidth]{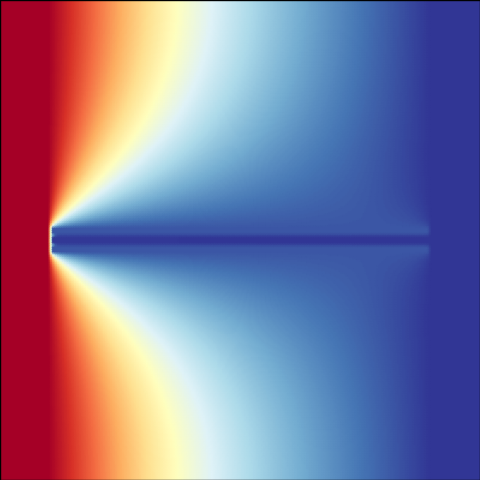}}
\\
\hline
3 &
\encloseimage{\includegraphics[width=0.13\textwidth,frame,trim=50 450 850 450,clip]{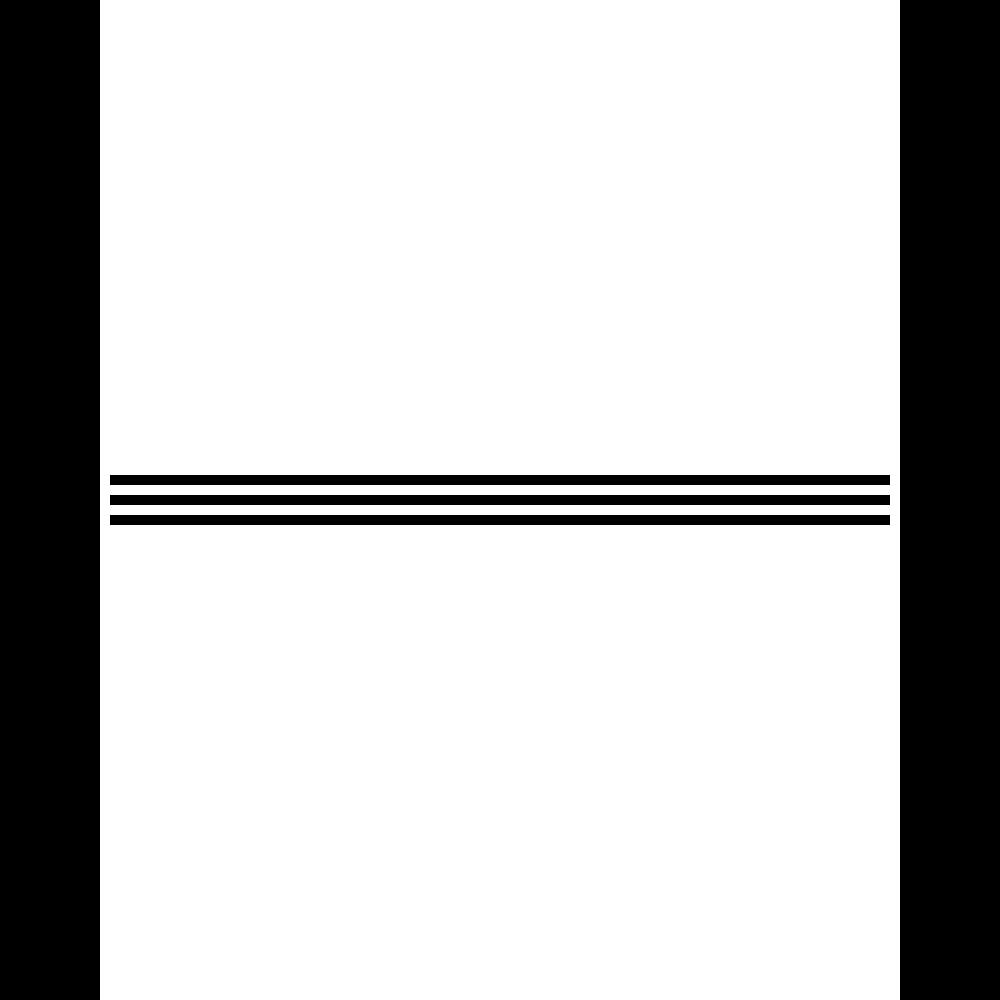}} &
\encloseimage{\includegraphics[width=0.13\textwidth,frame]{h_seq3.png}} &
\encloseimage{\includegraphics[width=0.13\textwidth,frame,trim=850 450 50 450,clip]{h_seq3.png}} &
\encloseimage{\includegraphics[width=0.13\textwidth]{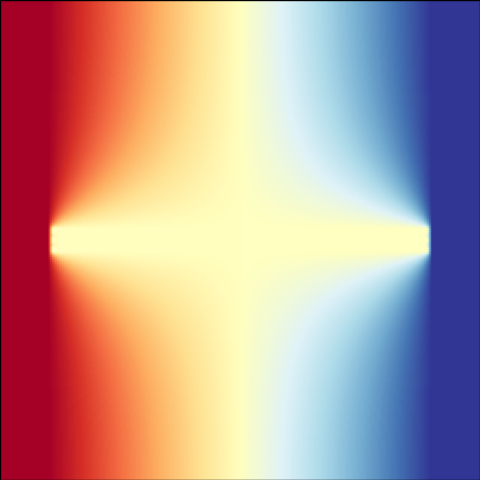}}
\\
\hline
4 &
\encloseimage{\includegraphics[width=0.13\textwidth,frame,trim=50 450 850 450,clip]{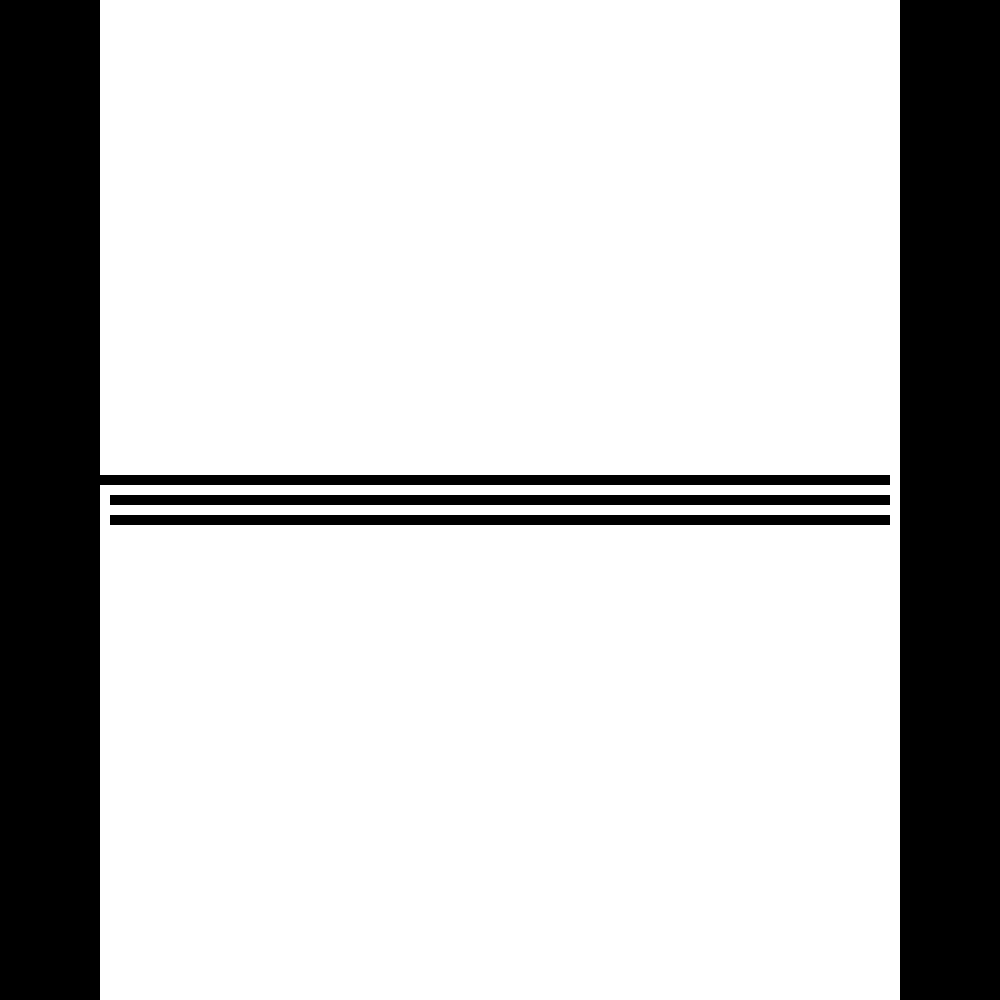}} &
\encloseimage{\includegraphics[width=0.13\textwidth,frame]{h_seq4.png}} &
\encloseimage{\includegraphics[width=0.13\textwidth,frame,trim=850 450 50 450,clip]{h_seq4.png}} &
\encloseimage{\includegraphics[width=0.13\textwidth]{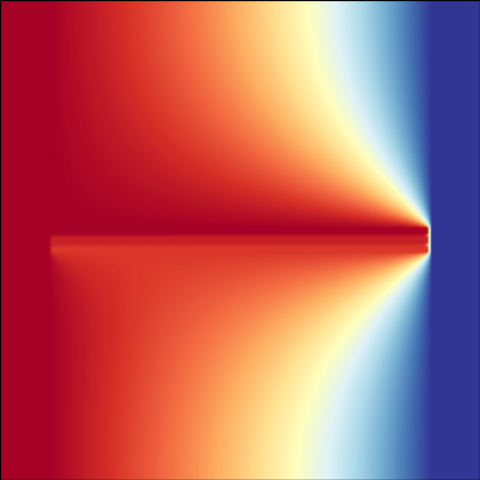}}
\\
\hline
5 &
\encloseimage{\includegraphics[width=0.13\textwidth,frame,trim=50 450 850 450,clip]{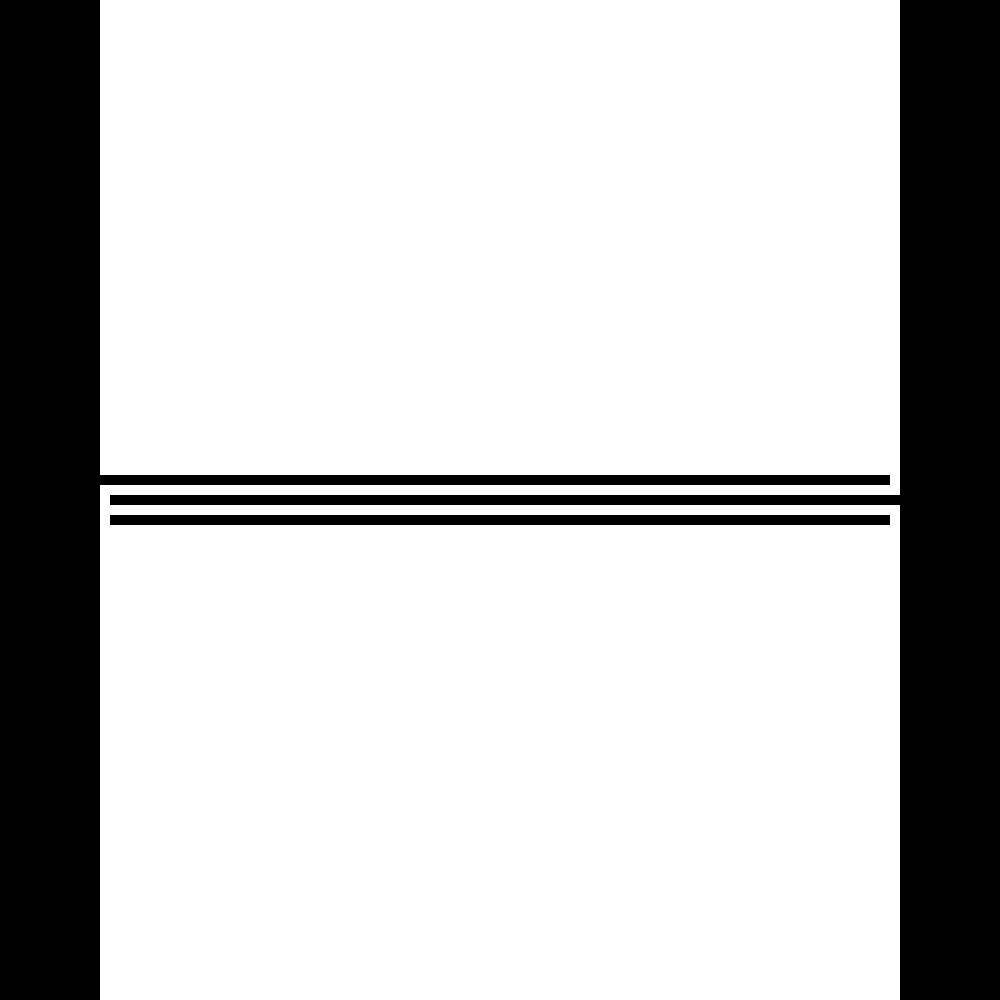}} &
\encloseimage{\includegraphics[width=0.13\textwidth,frame]{h_seq5.png}} &
\encloseimage{\includegraphics[width=0.13\textwidth,frame,trim=850 450 50 450,clip]{h_seq5.png}} &
\encloseimage{\includegraphics[width=0.13\textwidth]{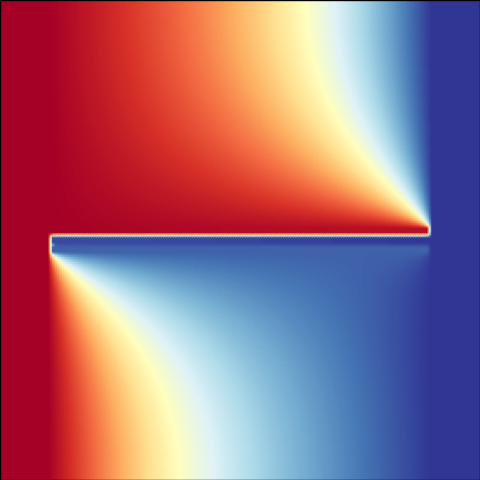}}
\end{tabular}
\vspace{10pt}
\caption{Sequence of structures simulated along with solutions for
one parameter value. Very localized changes cause strong global changes
in the solution. Script: \texttt{create\_full\_solutions.py}.}
\label{fig:structures}
\end{figure}


The unit square is partitioned into two regions:
one region with constant heat conductivity $\sigma_\mu = 1$ and
one region with constant, but parameterized
conductivity $\sigma_\mu = 1 + \mu$
where $\mu \in [10^0, 10^5]$.
We call this second region the ``high conductivity region'' $\Omega_{h,i}$.
For reproduction or benchmarking, we give
precise definitions in the following.
In the first geometry, the high conductivity region is
\begin{eqnarray}
\Omega_{h,1} &:=&
\big[(0.0 ,0.1) \times (0, 1) \big]
\ \cup \
\big[(0.9, 1.0) \times (0, 1) \big]
\ \cup \ \\ &&
\nonumber
\big[(0.11, 0.89) \times (0.475, 0.485) \big]
\ \cup \
\big[(0.1, 0.9) \times (0.495, 0.505) \big]
\ \cup \ \\ &&
\nonumber
\big[(0.11, 0.89) \times (0.515, 0.525) \big],
\end{eqnarray}
see also Figure~\ref{fig:geo1}.
The inhomogeneous Dirichlet boundary conditions are handled by a shift function
$u_s$ and we solve for $u_0$ having homogeneous Dirichlet values where
$u = u_0 + u_s$.
The parametrization of the conductivity in the high conductivity region,
$\sigma_\mu$, leads to a term in the affine decomposition of the bilinear form.
The affine decomposition of the bilinear form and linear form are
\begin{eqnarray}
a_\mu(u_0,\varphi) &=& \mu \int_{\Omega_{h,i}}  \nabla u_0 \cdot \nabla \varphi \dx + \int_{\Omega}  \nabla u_0 \cdot \nabla \varphi \dx \\
\nonumber
\dualpair{f_\mu}{\varphi} &=& - \mu \int_{\Omega_{h,i}} \nabla u_s \cdot \nabla \varphi \dx - \int_{\Omega} \nabla u_s \cdot \nabla \varphi \dx .
\end{eqnarray}
The coercivity constant $\alpha_\mu$ of the corresponding bilinear form with
respect to the $H^1$ norm is bounded from
below by $\alpha_{LB} := \frac{\sigma_\mathrm{min}}{c_F^2+1}$ where $\sigma_\mathrm{min}$
is the minimal conductivity and $c_F = \frac{1}{\sqrt{2}\pi}$ is the constant in the Friedrich's inequality
$
	\norm{\varphi}_{L^2(\Omega)} \leq c_F \norm{\nabla \varphi}_{L^2(\Omega)} \forall \varphi \in H^1_0(\Omega)
$.
The problem is discretized using $P^1$ ansatz functions on a structured triangle grid with maximum triangle size $h$. The grid is carefully constructed to resolve the
high conductivity regions, i.e. $h$ is chosen to be $1/n$ where $n$ is a multiple of 200.
To mimic ``arbitrary local modifications'', the high conductivity region is changed slightly four times, which leads to a sequence of five structures to be simulated in total. The high conductivity regions are defined as:
\begin{eqnarray}
\Omega_{h,2} &:=& \Omega_{h,1} \setminus \big[(0.1, 0.11) \times (0.495, 0.505) \big]\\
\Omega_{h,3} &:=& \Omega_{h,2} \setminus \big[(0.89, 0.9) \times (0.495, 0.505) \big] \nonumber\\
\Omega_{h,4} &:=& \Omega_{h,3} \cup \big[(0.1, 0.11) \times (0.515, 0.525) \big] \nonumber\\
\Omega_{h,5} &:=& \Omega_{h,4} \cup \big[(0.89, 0.9) \times (0.495, 0.505) \big]
\nonumber
\end{eqnarray}
These modifications only affect a very small portion of the domain
(actually, only 0.01\%), but for high contrast configurations, they lead to strong global
changes in the solution, see Figure~\ref{fig:structures}.
An equidistant domain decomposition of $8 \times 8$ domains is used. The mesh resolves the domain boundaries.
\subsection*{Configuration}
If not specified otherwise, we use
a mesh size of $1/h = 200$,
a training tolerance of $\varepsilon_\mathrm{train} = 10^{-4}$,
a number of random samplings of $M = 60$,
a greedy tolerance of $\varepsilon_\mathrm{greedy} = 10^{-3}$,
a convergence criterion of $\norm{R_\mu(\widetilde u_\mu)}_{V_h'} < 10^{-2}$,
an enrichment fraction of $d = 0.5$,
a training set of size $|\Xi| = 6$,
and the parameter for extension calculation is $\overline \mu = 10^5$.
\subsection{Results}
\label{sec:num_results}
\begin{figure}
\centering
\includegraphics[width=0.6\textwidth]{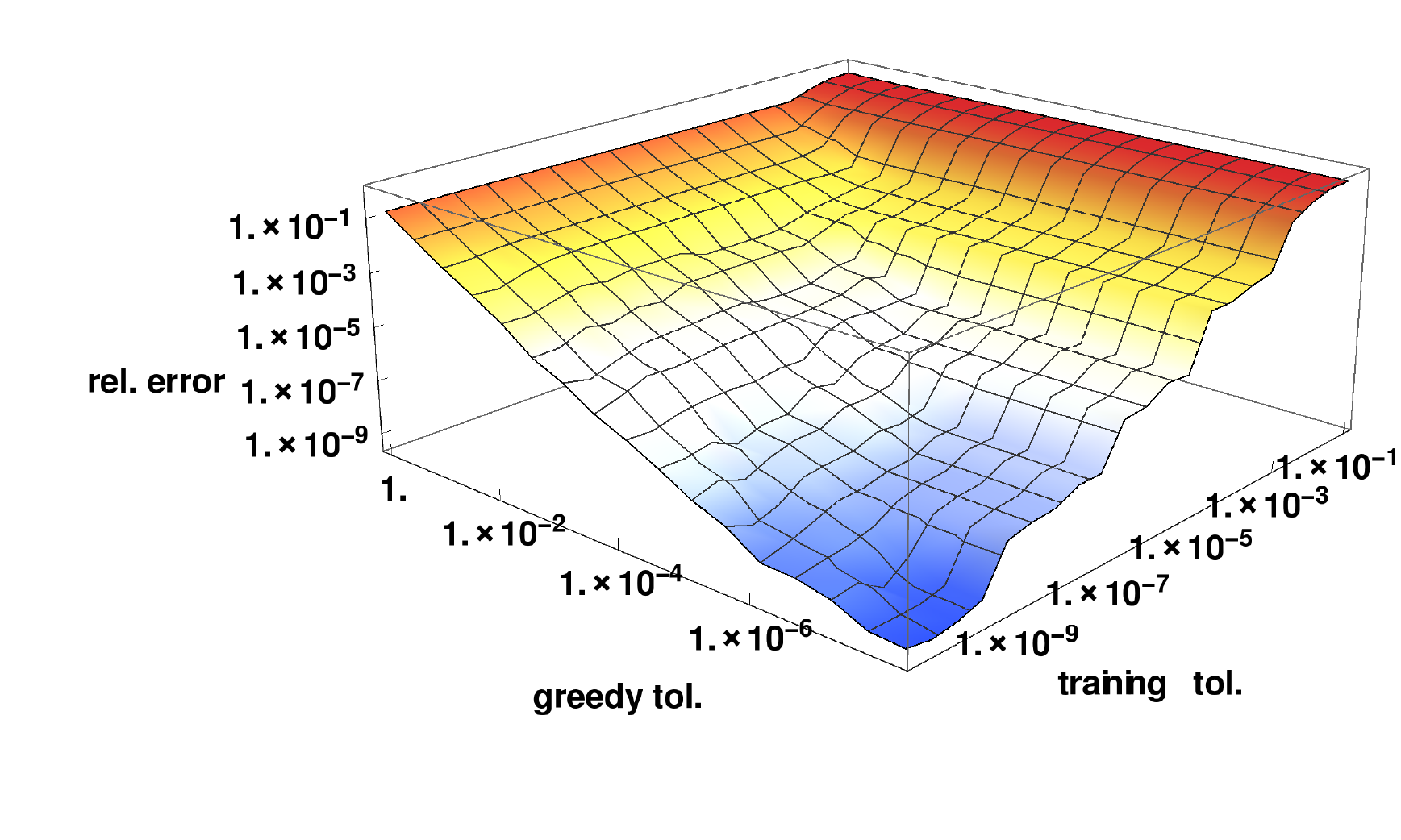}
\caption{Maximum relative $H^1$-error on training set $\Xi$ in dependence of tolerances in codim 1 training
and codim 0 greedy. Online enrichment disabled. Script:
\texttt{experiment\_tolerances.py}.}
\label{fig:tolerances}
\end{figure}
The initial reduced space is created using the local trainings and
greedy algorithms. In both the trainings and the greedy algorithms
a tolerance parameter steers the quality of the obtained reduced space:
In the trainings,
$\varepsilon_\mathrm{train}$
is the stopping criterion for the SnapshotGreedy
(Algorithm \ref{algo:snapshot_greedy}) and the local greedys
stop when the local error estimator stays below the prescribed tolerance
$\varepsilon_\mathrm{greedy}$.
The resulting reduction errors in
dependence on the two tolerances are depicted in Figure~\ref{fig:tolerances}.
\begin{table}
\footnotesize
\centering
\begin{tabu}{c|[1pt]c|[lightgray]c|c|[lightgray]c|c|[lightgray]c|[1pt]c|[lightgray]c}
\multirow{2}{*}{geometry}&
\multicolumn{6}{c|[1pt]}{\em with training} &
\multicolumn{2}{c}{\em without training}\\
&\multicolumn{2}{c|}{trainings} &
\multicolumn{2}{c|}{greedys} &
\multicolumn{2}{c|[1pt]}{iterations} &
\multicolumn{2}{c}{iterations}
\\
\hline
&\multicolumn{2}{c|}{reuse:} &
\multicolumn{2}{c|}{reuse:} &
\multicolumn{2}{c|[1pt]}{reuse:} &
\multicolumn{2}{c}{reuse:}
\\
 & no & yes & no & yes & no & yes & no & yes\\
\tabucline[lightgray]{2-}
1 & 112 & 112 (-0 \%)  & 64 & 64 (-0 \%)  & 24 & 24 (-0 \%)
& 46 & 46 (-0 \%) \\
2 & 112 & 5 (-96 \%)  & 64 & 8 (-88 \%)  & 24 & 13 (-46 \%)
& 48 & 28 (-42 \%) \\
3 & 112 & 5 (-96 \%)  & 64 & 8 (-88 \%)  & 20 & 14 (-30 \%)
& 42 & 27 (-36 \%) \\
4 & 112 & 3 (-97 \%)  & 64 & 6 (-91 \%)  & 25 & 10 (-60 \%)
& 54 & 23 (-57 \%) \\
5 & 112 & 5 (-96 \%)  & 64 & 8 (-88 \%)  & 25 & 12 (-52 \%)
& 52 & 27 (-48 \%) \\
\end{tabu}
\caption{Number of iterations of online enrichment:
(a) With and without codim 1 training.
(b) With and without reuse of basis
functions of previous simulations.
Convergence criterion: $\norm{R_\mu(\widetilde u_\mu)}_{V_h'} < 10^{-4}$,
greedy tolerance: $\varepsilon_\mathrm{greedy} = 10^{-5}$.
See also Figures \ref{fig:basisreusewithtraining}, \ref{fig:basisreuse}.\\
Scripts: \texttt{experiment\_basisreuse\_with\_training.py}, \texttt{experiment\_basisreuse.py}.}
\label{tab:basisreusecompare}
\end{table}

\begin{figure}
\centering
\input{figure_basisreuse_with_training}
\caption{Relative error over iteration with and without basis reuse
after geometry change. With codim 1 training.
Convergence criterion: $\norm{R_\mu(\widetilde u_\mu)}_{V_h'} < 10^{-4}$,
greedy tolerance: $\varepsilon_\mathrm{greedy} = 10^{-5}$.
See also Table~\ref{tab:basisreusecompare}.\\
Script: \texttt{experiment\_basisreuse\_with\_training.py}.}
\label{fig:basisreusewithtraining}
\end{figure}

\begin{figure}
\centering
\input{figure_basisreuse}
\caption{Relative $H^1$ error over iterations with and without basis reuse
after geometry change. Without codim 1 training.
Convergence criterion: $\norm{R_\mu(\widetilde u_\mu)}_{V_h'} < 10^{-4}$,
greedy tolerance: $\varepsilon_\mathrm{greedy} = 10^{-5}$.
See also Table~\ref{tab:basisreusecompare}.\\
Script: \texttt{experiment\_basisreuse.py}.}
\label{fig:basisreuse}
\end{figure}

\begin{figure}
\centering
\input{figure_estimatorperformance}
\caption{
Error estimators $\Delta^{rel}$ and
$\Delta^{rel}_{loc}$ over iterations, compared to the
relative error. Plotted for $\alpha_\mu = \puconstantV = 1$.
Simulation performed with
a convergence criterion of $\norm{R_\mu(\widetilde u_\mu)}_{V_h'} < 10^{-6}$,
a training tolerance of $\varepsilon_\mathrm{train} = 10^{-5}$,
and a greedy tolerance of $\varepsilon_\mathrm{greedy} = 10^{-7}$.
Script: \texttt{experiment\_estimatorperformance.py}.}
\label{fig:estimatorperformance}
\end{figure}

If the resulting error is too big, it can be further reduced using
iterations of online enrichment as depicted in Figure
\ref{fig:basisreusewithtraining}. Results suggest an
very rapid decay of the error with online enrichment.
The benefits of ArbiLoMod can be seen in Figure
\ref{fig:basisreusewithtraining} and Table
\ref{tab:basisreusecompare}: after
the localized geometry changes, most of the
work required in the initial basis creation does not need to be
repeated and the online enrichments converge faster for
subsequent simulations, leading to less iterations.
The online enrichment presented here
converges even when started on empty bases, as depicted in
Figure~\ref{fig:basisreuse}.
It does not rely on properties
of the reduced local subspaces created by trainings and greedys.
\newcommand{\rot}[1]{\multicolumn{1}{l}{\adjustbox{angle=24,lap=\width-1em}{#1}}‌​}
\begin{table}
\footnotesize
\begin{tabular}{r|r|r|r|r|r|r|r|r|r}
\multicolumn{1}{c}{$1/h$} & \rot{global dofs} & \rot{global solve time [s]} &
\rot{\# dofs, codim 1 training space} &
\rot{avg. time per codim 1 training [s]} &
\rot{\# dofs, codim 0 space} &
\rot{max time per codim 0 greedy [s]} & \rot{\# dofs, reduced problem} & \rot{solve time, reduced [ms]} & \rot{max error [\permil]}
\\
\hline
200 & 80,401 & 0.656 & 7,626 & 1.02 & 1,201 & 4.9 & 1,178 & 21.8 & 1.316\\
400 & 320,801 & 4.87 & 30,251 & 5.14 & 4,901 & 7.04 & 1,151 & 22.4 & 1.433\\
600 & 721,201 & 23.6 & 67,876 & 14 & 11,101 & 10.7 & 1,116 & 19.1 & 2.035\\
800 & 1,281,601 & 41.8 & 120,501 & 29.5 & 19,801 & 17.8 & 1,101 & 17.1 & 2.735\\
1000 & 2,002,001 & 86.4 & 188,126 & 51.3 & 31,001 & 24.2 & 1,089 & 18.8 & 1.351\\
1200 & 2,882,401 & 230 & 270,751 & 81.2 & 44,701 & 36.6 & 1,082 & 18.6 & 4.462\\
1400 & 3,922,801 & 230 & 368,376 & 120 & 60,901 & 51.7 & 1,073 & 18.2 & 2.379
\end{tabular}
\vspace{10pt}
\caption{Runtimes for selected parts of ArbiLoMod without
online enrichment.
``max error'' denotes $\max_{\mu \in \Xi} {\|u_\mu -\widetilde u_\mu\|_V}/{\|u_\mu\|_V}$.
Runtimes measured using
a pure Python implementation, using SciPy solvers (SuperLU sequential).
Note that the global solve time is for one parameter value
while training and greedy produce spaces valid for all parameter
values in the training set $\Xi$.
Script: \texttt{experiment\_create\_timings.py}.}
\label{tab:runtimes}
\end{table}
%

\begin{table}
\begin{tabular}{r|r|r|r|r|r|r|r|r|r}
\multicolumn{1}{c}{$1/H$} &
\rot{\# dofs, codim 1 training space} &
\rot{mean training time [s]} &
\rot{max training time [s]} &
\rot{\# dofs, codim 0 space} &
\rot{mean greedy time [s]} &
\rot{max greedy time [s]} &
\rot{\# dofs, reduced problem} &
\rot{solve time, reduced [ms]} &
\rot{max error [\permil]}
\\
\hline
4 & 30,251 & 10.9 & 14.1 & 4,901 & 3.07 & 6.6 & 403 & 3.85 & 0.362\\
5 & 19,401 & 6.88 & 8.74 & 3,121 & 2.32 & 11.7 & 517 & 4.38 & 0.435\\
8 & 7,626 & 2.63 & 3.55 & 1,201 & 1.35 & 5.69 & 1,178 & 14.9 & 1.32\\
10 & 4,901 & 1.68 & 1.94 & 761 & 0.655 & 3.98 & 1,451 & 19.8 & 0.220\\
20 & 1,251 & 0.483 & 0.661 & 181 & 0.305 & 3.18 & 5,025 & 94.4 & 0.0804
\end{tabular}
\caption{
Influence of domain size $H$. Fine mesh resolution: $1/h = 200$.
``max error'' is $\max_{\mu \in \Xi} {\|u_\mu -\widetilde u_\mu\|_V}/{\|u_\mu\|_V}$.
Smaller domains lead to more parallelism and smaller local problems, but
also to more global dofs and a worse constant in the a-posteriori error estimator.
Measured using
a pure Python implementation, using SciPy solvers (SuperLU sequential).
Script: \texttt{experiment\_H\_study.py}.
}
\label{tab:H_study}
\end{table}
%

The performance of the localized a posteriori error estimator
$\Delta^{rel}_{loc}$ can be seen in Figure~\ref{fig:estimatorperformance}.
Comparison of the localized estimator 
$\Delta^{rel}_{loc}$
with the global estimator
$\Delta^{rel}$
shows that, for the example considered here,
the localization does not add a significant factor beyond the factor
$\puconstantV$, which is supposed to be close to one and was thus 
neglected in Fig.\ \ref{fig:estimatorperformance}.

Even though our implementation was not tuned for performance and is not
parallel, we present some timing measurements in Table~\ref{tab:runtimes} and Table~\ref{tab:H_study}.
Table~\ref{tab:runtimes} shows that, already in our unoptimized implementation, trainings and greedys
have a shorter runtime than a single global solve for problems of sufficient 
size. Taking into account that trainings and greedys create a solution
space valid in the whole parameter space, this data is a strong hint
that ArbiLoMod can realize its potential for acceleration 
for large 
problems, in an optimized implementation and a parallel computing environment.
Especially when the solution is to be calculated at multiple parameter values.
Table~\ref{tab:H_study} shows the effect of choosing the domain size $H$: 
Large domains lead to large local problems, while small domains lead to a large
reduced global problem (see also Section \ref{sec:H_explanation}).
\begin{figure}
\centering
\includegraphics[width=0.35\textwidth]{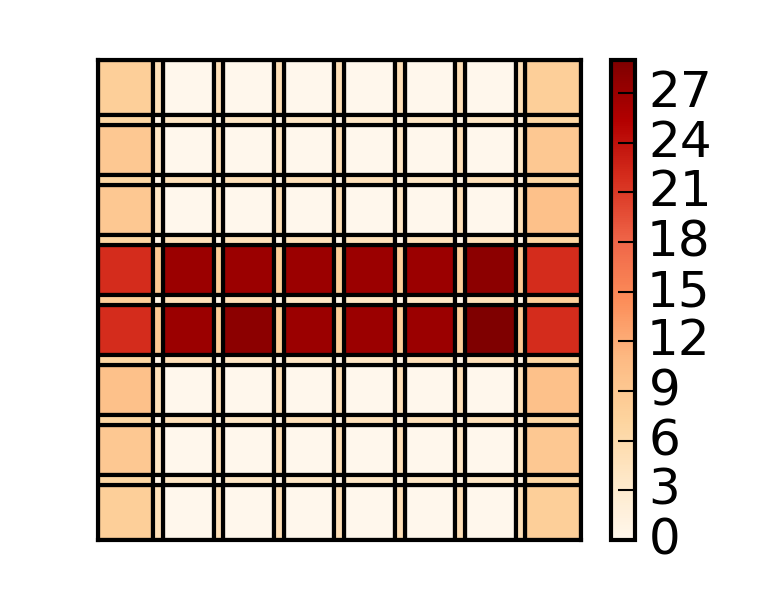}
\caption{Distribution of local basis sizes after initial training.
Relative reduction error at this configuration: $1.3 \cdot 10^{-3}$.
Script: \texttt{experiment\_draw\_basis\_sizes.py}.}
\label{fig:basis_sizes}
\end{figure}


\section{Conclusion}
We introduced ArbiLoMod, a simulation technique aiming at problems
with arbitrary local modifications and highly parallel computing
environments.
It is based on the Reduced Basis method, inheriting its advantages, but
localizing the basis construction and error estimation.
It consists of basis generation algorithms,
a localized a posteriori error estimator controlling the reduction error,
and a localized space enrichment procedure, improving the reduced
local spaces if necessary.
The initial basis generation algorithms
require no communication of unreduced quantities in a parallel implementation.
For the basis enrichment procedure, a strong compression of the
communicated quantities is possible.
We discussed the possibilities to use ArbiLoMod to implement a
scalable parallel code by reducing communication costs due to the
local structure and the local Reduced Basis strategy.
ArbiLoMod was demonstrated on a coercive example in two dimensions,
featuring high contrast, fine details and channels. Even though
small local modifications to this example lead to strong global changes in the
solution, ArbiLoMod was able to approximate the new solutions
after these geometry changes with a small fraction of the effort needed for the
initial geometry.
BSD-licensed source code is provided along with this publication, anyone
can reproduce all results presented here easily.


\section*{Acknowledgment}
{\it
The authors would like to thank Clemens Pechstein for many fruitful discussions and the referees, 
whose reviews lead to significant improvements of this manuscript.
}
\bibliographystyle{abbrv}
\bibliography{paper.bib}
\end{document}